\renewcommand*\env@matrix[1][\arraystretch]{%
  \edef\arraystretch{#1}%
  \hskip -\arraycolsep
  \let\@ifnextchar\new@ifnextchar
  \array{*\c@MaxMatrixCols c}}
\pgfplotsset{compat=1.18}
\def\csname ver@etex.sty\endcsname{3000/12/31}
\newcommand{\unitCardFunc}[1]{\mathbbm{I}\mleft(#1 \mright)}
\newcommand{\conclusionSectTitle}{Conclusions and future work}
\newcommand{\setFunct}[2]{\left\{ {#1} \, : \, {#2} \right\}}
\newcommand{\linearConSetSymbol}{\mathcal{F}}
\definecolor{orcidlogocol}{HTML}{A6CE39}
\tikzset{
  orcidlogo/.pic={
    \fill[orcidlogocol] svg{M256,128c0,70.7-57.3,128-128,128C57.3,256,0,198.7,0,128C0,57.3,57.3,0,128,0C198.7,0,256,57.3,256,128z};
    \fill[white] svg{M86.3,186.2H70.9V79.1h15.4v48.4V186.2z}
                 svg{M108.9,79.1h41.6c39.6,0,57,28.3,57,53.6c0,27.5-21.5,53.6-56.8,53.6h-41.8V79.1z M124.3,172.4h24.5c34.9,0,42.9-26.5,42.9-39.7c0-21.5-13.7-39.7-43.7-39.7h-23.7V172.4z}
                 svg{M88.7,56.8c0,5.5-4.5,10.1-10.1,10.1c-5.6,0-10.1-4.6-10.1-10.1c0-5.6,4.5-10.1,10.1-10.1C84.2,46.7,88.7,51.3,88.7,56.8z};
  }
}
\newcommand\orcid[1]{\,\href{https://orcid.org/#1}{\mbox{\scalerel*{\hspace{1cm}
\begin{tikzpicture}[yscale=-1,transform shape]
\pic{orcidlogo};
\end{tikzpicture}
}{|}}}}
\newcommand{\linearConSet}[1]{\linearConSetSymbol\mleft( {#1} \mright)}
\newcommand{\Tr}[1]{\text{tr}\left( {#1}\right)}
\DeclarePairedDelimiter\abs{\lvert}{\rvert}
\DeclareMathOperator*{\argmin}{arg\,min}
\newcommand{\N}{\mathbb{N}}
\newcommand{\R}{\mathbb{R}}
\newcommand{\lassColName}{SDP-Lasserre}
\newcommand{\eigenDecomp}{eigenvalue decomposition}
\newcommand{\eigenDecompCap}{Eigenvalue decomposition}
\newcommand{\NpComplexity}{NP}
\newcommand{\PComplexity}{P}
\newcommand{\colName}{GapClsd}
\newcommand{\stepSize}{\nu}
\newcommand{\indicatorVec}[1]{\mathbbm{1}_{#1}}
\newcommand{\symmMatSet}{\mathcal{S}}
\newcommand{\allStable}[1]{\left[n \right]_{#1}}
\newcommand{\edgeIdeal}[1]{\mathcal{I}_{#1}}
\newcommand{\edge}[2]{\left\{#1,#2\right\}}
\newcommand{\pairwiseUnion}[1]{{#1}^{2 \cup}}
\newcommand{\colNameEigs}{Bnd.$\!$ diff.}
\newcommand{\LassHierarc}{Lasserre hierarchy}
\newcommand{\posOnG}[1]{\mathbb{P}_{#1}}
\newcommand{\lassObj}[1]{v\mleft(#1\mright)}
\newcommand{\posOnGB}[2]{\mathbb{P}_{#1}\mleft( {#2} \mright)}
\newcommand{\partialPowerSet}[2]{\binom{\left[#1 \right]}{\leq {#2}}}
\newcommand{\spec}[1]{\Lambda\mleft({#1}\mright)}
\date{}
\newtheorem{lemma}{Lemma}
\begin{document}

\title{SDP bounds on the stability number via ADMM and intermediate levels of the Lasserre hierarchy} 

\author{Lennart Sinjorgo\thanks{CentER,  Department of Econometrics and OR, Tilburg University, The Netherlands \\
\hphantom{asd} \texttt{\{l.m.sinjorgo,r.sotirov,j.c.veralizcano\}@tilburguniversity.edu}}\,\,\thanks{corresponding author}  \orcid{0000-0003-0516-6360} \and Renata Sotirov\footnotemark[1] \orcid{0000-0002-3298-7255}
\and Juan C. Vera\footnotemark[1] \orcid{0000-0003-1394-3422}}

\maketitle
\vspace{-1em}
\begin{abstract}
    We consider the Lasserre hierarchy for computing bounds on the stability number of graphs. The semidefinite programs (SDPs) arising from this hierarchy involve large matrix variables and many linear constraints, which makes them difficult to solve using interior-point methods. We propose solving these SDPs using the alternating direction method of multipliers (ADMM). When the second level of the Lasserre hierarchy for a given graph is intractable for the ADMM, we consider an intermediate-level relaxation of the hierarchy. 
    To warm-start the ADMM, we use an optimal solution from the first level of the Lasserre hierarchy, which is equivalent to the well-known Lov\'asz theta function. Additionally, we use this solution to determine which degree two monomials to add in the Lasserre hierarchy relaxation to obtain an intermediate level between  1 and 2.
    Computational results demonstrate that our approach yields strong bounds on the stability number, which are computable within reasonable running times. We provide the best-known bounds on the stability number of various graphs from the literature.
\end{abstract}

{\bf Keywords.} semidefinite programming, stability number, ADMM, Lasserre hierarchy

\section{Introduction}
\label{section_Intro1}
A stable set of a graph is a subset of vertices that are pairwise non-adjacent. 
Given an undirected graph~$G$, the stable set problem is to determine a stable set of $G$ of maximum cardinality. 
The stability number of~$G$, denoted by $\alpha(G)$, is defined as the cardinality of a maximum stable set in $G$. Computing $\alpha(G)$ is \NpComplexity{}-hard. Hence, unless \PComplexity{} $=$ \NpComplexity{}, there is no polynomial time algorithm that computes it.

There exist different approaches for computing upper bounds on the stability number of a graph, and one of those is using semidefinite programming.
The first  SDP relaxation of the stable set problem is due to \citeauthor{lovasz1979shannon} \cite{lovasz1979shannon}, who introduced the Lov{\'a}sz theta function of a graph $G$, denoted by $\vartheta(G)$. For any graph $G$, $\vartheta(G) \geq \alpha(G)$ and $\vartheta(G)$ can be computed in polynomial time up to fixed precision. 
Semidefinite programs (SDPs) that define the Lov{\'a}sz theta function can be strengthened by cutting planes, see e.g., \cite{dukanovic2007semidefinite,gruberStableSet,schrijver1979comparison,pucher2024class}. 
The paper by Pucher and Rendl \cite{pucher2024class} currently provides one of the strongest SDP-based bounds for the stable set problem.

Several hierarchical approaches can also be applied to construct relaxations of the stable set problem.
Higher levels in the hierarchy correspond to stronger relaxations, which are also more difficult to solve due to the increased number of variables and constraints.  
Among the hierarchies that can be applied to the stable set problem are the Sherali-Adams hierarchy~\cite{sherali1990hierarchy} (based on  linear programming), the SDP-based hierarchy of Lov\'asz-Schrijver~\cite{lovasz1991cones}, the Lasserre hierarchy~\cite{lasserre2001global}, and the exact subgraph hierarchy (ESH)~\cite{adams2015hierarchy}.
Much research has recently been devoted to the ESH for the stable set problem~\cite{gaar2020computational,gaar2022sdp,gaar2024different}. 
The computational results in those papers show that the ESH  provides strong bounds within a reasonable computational time. 
It is known that the Lasserre hierarchy is stronger than the other hierarchies~\cite{laurent2003comparison,silvestri2013spectrahedral}. 
Despite this, little research has been devoted to the practical performance of the Lasserre hierarchy for the stable set problem.  A practical drawback of the Lasserre hierarchy is the order of the associated  positive semidefinite (PSD) matrix variable: level $k$ of the hierarchy involves a PSD matrix variable of order $\mathcal{O}(n^k)$, where $n$ is the number of vertices in the graph.

The classical method for solving semidefinite programs (SDPs) is the interior-point method (IPM)~\cite{NesterovNemirovskii,Alizadeh}. However, IPMs typically require large amounts of memory,  which limits their applicability to the Lasserre hierarchy.
The interior-point method is a second-order method that requires the construction and  Cholesky factorization of an   $m \times m$ Schur complement matrix, where $m$ is the number of linear equality constraints in the SDP relaxation.   Constructing and storing this dense matrix requires substantial memory,  particularly  when $m$ is large, such as in SDPs  derived from the Lasserre hierarchy. 
The worst-case complexity of computing the  Schur complement matrix is  ${\mathcal O}( mp^3+m^2p^2)$~\cite{Monteiro01011999}, where $p$ is the order of the PSD variable.
Computational complexity may be reduced in cases where the constraint matrices exhibit special structures such as low rank, see e.g.,~\cite{Ivanov01062010,Klerk2010ExploitingSS}. The Cholesky factorization for the Schur complement matrix requires ${\mathcal O}(m^3)$ operations,  which becomes impractical for large values of~$m$. These limitations of IPMs motivate the use of alternative methods that require less memory and bypass the Cholesky factorization.

The alternating direction method of multipliers (ADMM), see e.g.,~\cite{boyd2011distributed}, is a first-order method that can be used to solve SDPs, and requires significantly less memory than IPMs. Each iteration of the ADMM algorithm for solving SDPs consists of three steps: the orthogonal  projection onto the cone of positive semidefinite matrices, an orthogonal projection onto a polyhedral set, and a dual update step.
The most memory intensive step of the ADMM  is computing the \eigenDecomp{} of a symmetric PSD  matrix of order $p$, in each main loop of the algorithm.
The SDP relaxations of the stable set problem arising from the Lasserre hierarchy satisfy  $m \gg p$, and the computational complexity of \eigenDecomp{} for a symmetric matrix of order $p$ is ${\mathcal O}(p^3)$. Considering this, along with the fact that projections onto the polyhedral sets from the Lasserre relaxations can be performed efficiently, ADMMs appear more suitable than IPMs for computing Lasserre bounds for the stable set problem.\\

In this paper, we bridge the gap between theory and practice by using the ADMM to effectively compute
Lasserre hierarchy bounds for the stability number.
In particular, we compute bounds from intermediate levels of the Lasserre hierarchy for  $k$ between $1$ and $2$, including  $k=2$, on graphs with at most 300 vertices. 
However, we are not the first to consider intermediate levels of the Lasserre hierarchy; these have been employed in several other papers, see e.g.,~\cite{Campos2022PartialLR,chen2022sublevel,sinjorgo2024solving,van2008sums,anjos2005improved}.
For the majority of graphs, we restrict ourselves to intermediate levels of the hierarchy due to practical limitations on the order of the considered PSD variable, which we cap at 2500. \eigenDecompCap for a symmetric matrix of that order can still be performed reasonably well, especially when single precision is used.
The number of inequality constraints in the resulting SDP relaxation depends on the  considered graph and is at most $2,510,148$ in our experiments.
Storing the $m \times m$ Schur complement matrix, with $m = 2,510,148$, requires $46,944.9$ GB of RAM (!). 
Therefore, IPMs are intractable for solving the corresponding relaxations. 
Constructing an intermediate-level SDP relaxation of the Lasserre hierarchy for $k$ between 1 and 2 requires selecting specific degree two monomials.
These monomials of degree two, along with all monomials of degree one and the monomial of degree zero, form a basis used to derive the SDP relaxation.
We present a basis selection method  that exploits an SDP relaxation of the Lov\'asz theta function. 
It is known that $\vartheta(G)$  corresponds to the first level of the Lasserre hierarchy applied to the stable set problem, see e.g., \cite[Sect.~6]{laurent2003comparison}. Our ADMM algorithm incorporates a warm-starting approach to further improve performance.
The computational results show that the upper bounds on  $\alpha(G)$ 
computed here are competitive with the best SDP-based bounds for the stable set problem.
Moreover, these bounds can be obtained using the ADMM within reasonable running times, specifically within one hour.\\

This paper is organized as follows. The notation is introduced in  \Cref{section_notation}. We provide details of the Lasserre hierarchy for the stable set problem in \Cref{section_lassHierarc}. In \Cref{section_ADMM}, we show how to apply the ADMM to the SDPs arising from the Lasserre hierarchy. In \Cref{section_dynBasisWarmStart} we present a basis selection method for the construction of intermediate levels of the Lasserre hierarchy, and provide a method for warm-starting the ADMM. Computational  results are presented in \Cref{section_numericalResults}, and the conclusions are provided in \Cref{section_conclusion}.

\subsection{Notation}
\label{section_notation}
Given $n,k \in \mathbb{N}$, we define $[n] := \{1,\dots,n\}$ and $\partialPowerSet{n}{k} := \setFunct{ \beta \subseteq [n]}{ | \beta | \leq k}$. Let $\mathcal{B} \subseteq \partialPowerSet{n}{k}$. We define $\pairwiseUnion{\mathcal{B}}:= \setFunct{\beta \cup \beta^\prime}{\beta,\beta^\prime \in \mathcal{B}}$. For any $\beta \subseteq [n]$, we define $\indicatorVec{\beta} \in \{0,1\}^n$ as the indicator vector corresponding to $\beta$, i.e., $(\indicatorVec{\beta})_i = 1$ if and only if $i \in \beta$. We define $\unitCardFunc{\beta} \in \{0,1\}$ as the indicator that equals $1$ if $\abs{\beta} = 1$, and $0$ otherwise. We define $\mathbb{R}^k_+$ as the set of entrywise nonnegative $k$-dimensional real vectors. We denote by $\symmMatSet^k$  the set of symmetric matrices of order $k$, and by $\symmMatSet^k_+$  the set of symmetric PSD matrices of order $k$. We write $A \succeq 0$ to indicate that $A \in \symmMatSet^k_+$. For $a \in \mathbb{R}^k$, we define $\text{Diag}(a) \in \symmMatSet^k$ as the diagonal matrix with $a$ on its diagonal. For $A,B \in \symmMatSet^{k}$, we define the inner product $\langle A, B \rangle := \Tr{AB} = \sum_{i = 1}^k \sum_{j=1}^k A_{ij} B_{ij}$. The Frobenius norm of $A \in \symmMatSet^k$ is defined as $\| A \| := \sqrt{\langle A, A \rangle}$. For any closed convex $W \subseteq \symmMatSet^k$, the projection of $A \in \symmMatSet^k$ onto $W$ is defined as $\mathcal{P}_{W}(A) := \argmin_{X \in W} \| X - A \|$. 

Define $\mathbb{R}[x]$ as the set of polynomials in the variables $x_1, \dots, x_n$ with real coefficients. Given  $\mathcal{B} \subseteq \partialPowerSet{n}{k}$, we define $\mathbf{x}^\mathcal{B}$ as the $\abs*{\mathcal{B}}$-dimensional vector of all monomials $x^\beta := \prod_{i \in \beta} x_i$, $\beta \in \mathcal{B}$, with $x^\emptyset = 1$. The monomials $(x^\beta)_{\beta \in \mathcal{B}}$ form a basis of some subspace of $\mathbb{R}[x]$. With slight abuse of terminology, we refer to both $(x^\beta)_{\beta \in \mathcal{B}}$ and $\mathcal{B}$ as a bases.

\section{The Lasserre hierarchy for the stable set problem}
\label{section_lassHierarc}
We present the Lasserre hierarchy \cite{lasserre2001global} for the stable set problem. 
Similar derivations can also be found in, e.g., \cite[Sect.~3.1]{gouveia2010theta}, \cite{laurent2007semidefinite}, and \cite[Example~8.16]{laurent2009sums}.

Let $G = (V,E)$ be a simple undirected graph. Without loss of generality, we assume that $V = [n]$ for some $n \in \mathbb{N}$. Let $\beta \subseteq [n]$. If $\beta$ is a stable set in $G$, we say that $\beta$ is stable in $G$.
We define \mbox{$\allStable{G} := \setFunct{ \beta \subseteq [n]}{\beta \text{ is stable in }G}$} as the set of all stable sets of $G$, and set $S_G := \setFunct{ \indicatorVec{\beta}}{\beta \in \allStable{G}}$. We define $\posOnG{G} \subseteq \mathbb{R}[x]$ as the set of polynomials nonnegative over $S_G$.

Observe that the stability number $\alpha(G) = \max_{x \in S_G } \sum_{i \in [n]} x_i$, which is equivalent to
\begin{align}
    \label{eqn_popReform}
    \alpha(G) = \min_{\mu \in \mathbb{R}} \setFunct{ \mu }{  \mu - \sum_{i \in [n]} x_i  \in \posOnG{G}}.
\end{align}
This equivalence follows from the observation that for fixed $\mu$ we have $\min_{x \in S_G} \mu - \sum_{i \in [n]} x_i = \mu - \max_{x \in S_G} \sum_{i \in [n]} x_i = \mu - \alpha(G)$, which implies that $\mu - \sum_{i \in [n]} x_i$ is a nonnegative polynomial over $S_G$ if and only if $\mu \geq \alpha(G)$. In general, it is \NpComplexity{}-hard to optimize over $S_G$ or $\posOnG{G}$, which motivates the search of tractable relaxations of \eqref{eqn_popReform}. Let us formulate such a relaxation, in terms of sum of squares (SOS) polynomials. 
To this end,
we define the polynomial ideal
\begin{align}
    \label{eqn_edgeIdeal}
    \edgeIdeal{G} := \left\langle x_i^2 - x_i \text{ for all } i \in [n], \, x_i x_j \text{ for all } \edge{i}{j} \in E\right\rangle,
\end{align}
that is used to define $\posOnG{G}$ as follows:
\begin{align}
    \label{eqn_posReform}
    \posOnG{G} := \setFunct{f \in \mathbb{R}[x]}{f \equiv \sum_{j \in [k]} f^2_j \mod \edgeIdeal{G}, \, f_j \in \mathbb{R}[x] \text{ for all } j \in [k], \, k \in \mathbb{N}},
\end{align}
see e.g., \cite[Thm.~1]{parrilo2002explicit}.
Note that $\edgeIdeal{G}$ encodes the set $S_G$ in the sense that $x \in S_G$ if and only if $f(x) = 0$ for all $f \in \edgeIdeal{G}$. 
Polynomials of the form $\sum_{j \in [k]} f^2_j$ are called SOS polynomials. SOS polynomials, and thus polynomials in $\posOnG{G}$, can be expressed in terms of PSD matrices. Specifically,  we have that
\begin{align}
    \label{eqn_posSdpconnect}
    f \in \posOnG{G} \iff f \equiv  \left( \mathbf{x}^{\mathcal{B}'} \right)^\top A \mathbf{x}^{\mathcal{B}'} \mod \edgeIdeal{G} \text{ for some }A \in \symmMatSet^{\abs*{\mathcal{B}'}}_+,
\end{align}
where $\mathcal{B}' := \partialPowerSet{n}{n}$, 
see e.g., \cite[Prop.~2.1]{lasserre2009moments}. This shows that one can optimize over $\posOnG{G}$ using semidefinite programming. However, $\abs*{\mathcal{B}'}$ is exponential in $n$, which makes $\posOnG{G}$ intractable. It is therefore natural to consider a subset of $\posOnG{G}$ by fixing a $\mathcal{B} \subseteq \partialPowerSet{n}{n}$ such that $\abs*{\mathcal{B}}$ is polynomial in $n$. For any $\partialPowerSet{n}{1} \subseteq \mathcal{B} \subseteq \partialPowerSet{n}{n}$, we define a corresponding subset of $\posOnG{G}$ as
\begin{align}   
\label{eqn_posB_relaxation}
    \posOnGB{G}{\mathcal{B}} := \setFunct{f \in \mathbb{R}[x]}{ f \equiv \left(\mathbf{x}^\mathcal{B}\right)^\top A  \mathbf{x}^\mathcal{B} + c^\top \mathbf{x}^{\pairwiseUnion{\mathcal{B}}} \mod \mathcal{I}_G, \, A \in \symmMatSet^{\abs*{\mathcal{B}}}_+, \,\, c \in \mathbb{R}^{\abs*{\pairwiseUnion{\mathcal{B}}}}_+}.
\end{align}
In \eqref{eqn_posB_relaxation}, the term $c^\top \mathbf{x}^{\pairwiseUnion{\mathcal{B}}}$ is introduced  to enlarge $\posOnGB{G}{\mathcal{B}}$, resulting in stronger bounds on $\alpha(G)$. Moreover, we show in \Cref{section_explicitSDP} that the addition of this term does not increase the computational cost of obtaining these bounds. Note that $c^\top \mathbf{x}^{\pairwiseUnion{\mathcal{B}}}$ is an SOS polynomial modulo $\edgeIdeal{G}$, since $c^\top \mathbf{x}^{\pairwiseUnion{\mathcal{B}}} \equiv \left( \mathbf{x}^{\pairwiseUnion{\mathcal{B}}} \right)^\top \text{Diag}(c) \, \mathbf{x}^{\pairwiseUnion{\mathcal{B}}} \mod \edgeIdeal{G}$, and $\text{Diag}(c) \succeq 0$. Since $\left(\mathbf{x}^\mathcal{B}\right)^\top A  \mathbf{x}^\mathcal{B}$ is also an SOS polynomial, it follows that $ \posOnGB{G}{\mathcal{B}} \subseteq \posOnG{G}$. From \eqref{eqn_popReform}, we observe that the value
\begin{align}
    \label{eqn_alphaBDef}
    \alpha^\mathcal{B}(G) := \min_{\mu \in \mathbb{R}} \setFunct{ \mu }{  \mu - \sum_{i \in [n]} x_i  \in \posOnGB{G}{\mathcal{B}}}
\end{align}
satisfies $\alpha^\mathcal{B}(G) \geq \alpha(G)$, since $\posOnGB{G}{\mathcal{B}} \subseteq \posOnG{G}$. Note that $\alpha^\mathcal{B}(G)$ is well-defined since $\partialPowerSet{n}{1} \subseteq \mathcal{B}$. Computing $\alpha^\mathcal{B}(G)$ is equivalent to solving an SDP (see \Cref{section_explicitSDP}) wherein the PSD variable is of order $\abs*{\mathcal{B}}$. If $\abs*{\mathcal{B}}$ is polynomial in $n$, the value $\alpha^\mathcal{B}(G)$ can be computed in polynomial time up to fixed precision \cite[Cor.~9]{raghavendraSOS}.

It is worth noting that the computational effort of computing $\alpha^\mathcal{B}(G)$ for some $\mathcal{B} \subseteq \partialPowerSet{n}{n}$ can be reduced significantly by computing instead $\alpha^{\mathcal{B} \cap  \allStable{G}}(G)$. Indeed, $\abs*{\mathcal{B} \cap  \allStable{G}} \leq \abs*{\mathcal{B}}$, which results in a smaller PSD variable, and $\alpha^\mathcal{B}(G) = \alpha^{\mathcal{B} \cap  \allStable{G}}(G)$, as the next result shows. 
\begin{lemma}
\label{lemma_sizeReduction}
    Let $G$ be a graph on $n$ vertices and $\mathcal{B} \subseteq \partialPowerSet{n}{n}$. For $\alpha^\mathcal{B}(G)$ as in \eqref{eqn_alphaBDef}, we have $\alpha^\mathcal{B}(G) = \alpha^{\mathcal{B} \cap  \allStable{G}}(G)$.
\end{lemma}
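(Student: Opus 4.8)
The plan is to prove the stronger statement that $\posOnGB{G}{\mathcal{B}}$ and $\posOnGB{G}{\mathcal{B} \cap \allStable{G}}$ coincide as subsets of $\mathbb{R}[x]$; the equality of the two optimal values then follows at once from the definition \eqref{eqn_alphaBDef}. Throughout write $\mathcal{B}_s := \mathcal{B} \cap \allStable{G}$. The engine of the argument is the observation that $\edgeIdeal{G}$ annihilates every non-stable monomial: if $\beta \subseteq [n]$ is not stable, then $\beta$ contains some edge $\edge{i}{j} \in E$, so $x^\beta$ is divisible by $x_i x_j \in \edgeIdeal{G}$ and hence $x^\beta \equiv 0 \mod \edgeIdeal{G}$. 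Equivalently, the standard monomials $\{ x^\beta : \beta \in \allStable{G}\}$ span $\mathbb{R}[x]/\edgeIdeal{G}$, and every product reduces as $x^\beta x^{\beta'} \equiv x^{\beta \cup \beta'} \mod \edgeIdeal{G}$ by repeated use of $x_i^2 \equiv x_i$. I would also note in passing that $\mathcal{B}_s$ still contains $\partialPowerSet{n}{1}$, since the empty set and all singletons are stable, so $\alpha^{\mathcal{B}_s}(G)$ is well-defined.

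First I would dispose of the easy inclusion. Since $\mathcal{B}_s \subseteq \mathcal{B}$ we have $\pairwiseUnion{\mathcal{B}_s} \subseteq \pairwiseUnion{\mathcal{B}}$, so any certificate $(A,c)$ with $A \in \symmMatSet^{\abs*{\mathcal{B}_s}}_+$ and $c \in \mathbb{R}^{\abs*{\pairwiseUnion{\mathcal{B}_s}}}_+$ extends to a certificate on the larger basis by padding $A$ and $c$ with zero rows, columns and entries. This gives $\posOnGB{G}{\mathcal{B}_s} \subseteq \posOnGB{G}{\mathcal{B}}$.

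The substance is the reverse inclusion. Take $f \in \posOnGB{G}{\mathcal{B}}$, certified by some $A \in \symmMatSet^{\abs*{\mathcal{B}}}_+$ and $c \in \mathbb{R}^{\abs*{\pairwiseUnion{\mathcal{B}}}}_+$. Expanding $(\mathbf{x}^\mathcal{B})^\top A \mathbf{x}^\mathcal{B} = \sum_{\beta,\beta' \in \mathcal{B}} A_{\beta\beta'} x^\beta x^{\beta'}$ and using $x^\beta \equiv 0$ whenever $\beta$ is non-stable, every term carrying a non-stable index vanishes modulo $\edgeIdeal{G}$; what survives is $(\mathbf{x}^{\mathcal{B}_s})^\top A_{ss} \mathbf{x}^{\mathcal{B}_s}$, where $A_{ss}$ is the principal submatrix of $A$ on the indices $\mathcal{B}_s$. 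Being a principal submatrix of a PSD matrix, $A_{ss} \in \symmMatSet^{\abs*{\mathcal{B}_s}}_+$. For the linear term, $c^\top \mathbf{x}^{\pairwiseUnion{\mathcal{B}}} \equiv \sum_{\gamma} c_\gamma x^\gamma \mod \edgeIdeal{G}$, the sum running over the stable $\gamma \in \pairwiseUnion{\mathcal{B}}$. Here I must check that each such stable $\gamma$ already lies in $\pairwiseUnion{\mathcal{B}_s}$: if $\gamma = \beta \cup \beta'$ is stable with $\beta,\beta' \in \mathcal{B}$, then $\beta$ and $\beta'$, being subsets of a stable set, are themselves stable, so $\beta,\beta' \in \mathcal{B}_s$ and $\gamma \in \pairwiseUnion{\mathcal{B}_s}$. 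Defining $c' \in \mathbb{R}^{\abs*{\pairwiseUnion{\mathcal{B}_s}}}_+$ by copying $c$ on the stable indices and setting the rest to zero, I obtain $f \equiv (\mathbf{x}^{\mathcal{B}_s})^\top A_{ss}\mathbf{x}^{\mathcal{B}_s} + (c')^\top \mathbf{x}^{\pairwiseUnion{\mathcal{B}_s}} \mod \edgeIdeal{G}$, i.e.\ $f \in \posOnGB{G}{\mathcal{B}_s}$.

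Combining the two inclusions yields $\posOnGB{G}{\mathcal{B}} = \posOnGB{G}{\mathcal{B}_s}$, whence $\alpha^\mathcal{B}(G) = \alpha^{\mathcal{B}_s}(G)$. The only genuine obstacle is the bookkeeping in the reverse inclusion — confirming that the stable monomials produced by the $c$-term are indexed by $\pairwiseUnion{\mathcal{B}_s}$ rather than merely by $\pairwiseUnion{\mathcal{B}}$ — and this is settled by the downward-closedness of stability under taking subsets.
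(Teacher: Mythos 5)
Your proof is correct and follows essentially the same route as the paper's: both reduce the lemma to showing $\posOnGB{G}{\mathcal{B}} = \posOnGB{G}{\mathcal{B} \cap \allStable{G}}$, obtain the easy inclusion from $\mathcal{B} \cap \allStable{G} \subseteq \mathcal{B}$, and prove the reverse by passing to the principal submatrix of $A$ and the restriction of $c$, using that $x^\beta \equiv 0 \mod \edgeIdeal{G}$ for non-stable $\beta$. You merely spell out details the paper leaves implicit (zero-padding for the easy direction, positive semidefiniteness of the principal submatrix, and the downward-closedness argument showing stable $\gamma \in \pairwiseUnion{\mathcal{B}}$ lie in $\pairwiseUnion{\left(\mathcal{B} \cap \allStable{G}\right)}$), which is sound.
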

\begin{proof}
    For notational convenience, we write $\mathcal{B}' := \mathcal{B} \cap  \allStable{G}$. By the definition of $\alpha^\mathcal{B}(G)$, it suffices to show that $\posOnGB{G}{\mathcal{B'}} = \posOnGB{G}{\mathcal{B}}$. Since $\mathcal{B}' \subseteq \mathcal{B}$, it is clear from \eqref{eqn_posB_relaxation} that $\posOnGB{G}{\mathcal{B'}} \subseteq \posOnGB{G}{\mathcal{B}}$. Thus, it remains to show the reverse inclusion. Let $f \in \posOnGB{G}{\mathcal{B}}$, and let $A \in \symmMatSet^{\abs*{\mathcal{B}}}_+$ and $c \in \mathbb{R}^{\abs*{\pairwiseUnion{\mathcal{B}}}}_+$ satisfy $f \equiv \left(\mathbf{x}^\mathcal{B}\right)^\top A  \mathbf{x}^\mathcal{B} + c^\top \mathbf{x}^{\pairwiseUnion{\mathcal{B}}} \mod \mathcal{I}_G$, where $\edgeIdeal{G}$ is defined in \eqref{eqn_edgeIdeal}. Let $A$ be indexed by the elements of $\mathcal{B}$, and let $A'$ be the principal submatrix of $A$ indexed by the elements of $\mathcal{B}'$. We define similarly the vector $c' = (c_\beta)_{\beta \in \pairwiseUnion{{\left(\mathcal{B}'\right)}}}$. Since $x^\beta \equiv 0 \mod \edgeIdeal{G}$ for any $\beta \in \mathcal{B} \setminus \mathcal{B}'$, we have that
    \begin{align}
f \equiv \left(\mathbf{x}^\mathcal{B}\right)^\top A  \mathbf{x}^\mathcal{B} + c^\top \mathbf{x}^{\pairwiseUnion{\mathcal{B}}} \equiv \left(\mathbf{x}^{\mathcal{B}'}\right)^\top A'  \mathbf{x}^{\mathcal{B}'} + \left(c' \right)^\top \mathbf{x}^{\pairwiseUnion{{\left(\mathcal{B}'\right)}}} \mod \edgeIdeal{G} \implies f \in \posOnGB{G}{\mathcal{B}'},
    \end{align}
    which completes the proof.
\end{proof}

\newcommand{\lassBasis}{\partialPowerSet{n}{k}}
The $k$th level of the Lasserre hierarchy for the stable set problem is to compute $\alpha^{\lassBasis}(G)$, or equivalently, $\alpha^{\partialPowerSet{n}{k} \cap \allStable{G}}(G)$. For any fixed value of $k$, $\abs*{\lassBasis} \in \mathcal{O}(n^k)$, and thus, $\alpha^{\lassBasis}(G)$ can be computed in polynomial time. The sequence $(\alpha^{\lassBasis}(G))_{k \in \mathbb{N}}$ is decreasing towards $\alpha(G)$. Moreover, if $\alpha(G) \geq 2$, then $\alpha^{\lassBasis}(G) = \alpha(G)$ for $k \geq \alpha(G)-1$ \cite[Prop.~21]{laurent2003comparison}.

\section{The ADMM for computing \texorpdfstring{$\alpha^{\mathcal{B}}(G)$}{alphaB(G)}}
\label{section_ADMM}

In this section we propose the ADMM for computing $\alpha^{\mathcal{B}}(G)$, for general bases $\partialPowerSet{n}{1} \subseteq \mathcal{B} \subseteq \partialPowerSet{n}{n}$. 
The ADMM has been successfully used to solve SDPs, see e.g., \cite{rontsis2022efficient,sinjorgo2024solving,madani2015admm,oliveira2018admm}. Compared to the IPM,  the classical method for solving SDPs, the ADMM requires less memory, making it better suited for solving the large-scale SDPs that arise in the Lasserre hierarchy for the stable set problem.

Throughout the remainder of this section we fix some basis $\mathcal{B}$ that satisfies $\partialPowerSet{n}{1} \subseteq \mathcal{B} \subseteq \partialPowerSet{n}{n}$.
\subsection{The SDP defining \texorpdfstring{$\alpha^\mathcal{B}(G)$}{alphaB(G)}}
\label{section_explicitSDP}
Consider the constraint $\mu - \sum_{i \in [n]} x_i  \in \posOnGB{G}{\mathcal{B}}$ in the definition of $\alpha^\mathcal{B}(G)$, given by \eqref{eqn_alphaBDef}. It follows from \eqref{eqn_posB_relaxation} that this constraint is equivalent to
\begin{align}
    \label{eqn_polyEquiv1}
    \mu - \sum_{i \in [n]} x_i   \equiv \left( \mathbf{x}^\mathcal{B} \right)^\top A \mathbf{x}^\mathcal{B} + c^\top \mathbf{x}^{\pairwiseUnion{\mathcal{B}}}\mod \mathcal{I}_G, \, A \in \symmMatSet^{\abs*{\mathcal{B}}}_+, \, c \in \mathbb{R}^{\abs*{\pairwiseUnion{\mathcal{B}}}}_+,
\end{align}
where $\edgeIdeal{G}$ is defined in \eqref{eqn_edgeIdeal}. Let us index the matrix $A$ with the elements of $\mathcal{B}$, and $c$ with elements of $\pairwiseUnion{\mathcal{B}}$.  

The equivalence relation \eqref{eqn_polyEquiv1} implies that the two polynomials have equal coefficients of $x^\beta$, for all $\beta \in \allStable{G}$. Thus, \eqref{eqn_polyEquiv1} implies the following linear equalities: $A_{\emptyset,\emptyset} + c_\emptyset = \mu$ and 
\begin{align}    
\label{eqn_eliminateCDiff}
\sum_{\beta \in \mathcal{B}} \, \,\sum_{\beta^\prime \in \mathcal{B} : \beta \cup \beta^\prime = \gamma} A_{\beta, \beta^\prime} \, + c_\gamma = -\unitCardFunc{\gamma} \, \text{ for all } \gamma \in \pairwiseUnion{\mathcal{B}} \cap \allStable{G}, \, \, \gamma \neq \emptyset. 
\end{align}
Here, we consider $\gamma \in  \pairwiseUnion{\mathcal{B}}$ since for any $\beta,\beta' \subseteq [n]$, $x^\beta x^{\beta'} \equiv x^{\beta \cup \beta'} \mod \edgeIdeal{G}$. Since the objective in \eqref{eqn_alphaBDef} is to minimize $\mu$, it follows that at optimality, we have $A_{\emptyset, \emptyset} = \mu$ and $c_\emptyset = 0$. We eliminate the other entries of $c$, by transforming the equality constraints from \eqref{eqn_eliminateCDiff} into inequality constraints, using that $c \geq 0$. It then follows that
\begin{align}
    \label{eqn_alphaBCompact}
    \alpha^\mathcal{B}(G) = \min \setFunct{A_{\emptyset,\emptyset}}{A \in \symmMatSet^{\abs*{\mathcal{B}}}_+ \cap \linearConSet{\mathcal{B}} },
\end{align}
where
\begin{equation}
    \label{eqn_sdpCons}
    \linearConSet{\mathcal{B}} := \setFunct{ A \in \symmMatSet^{\abs*{\mathcal{B}}} }{ \sum_{\beta \in \mathcal{B}} \, \,\sum_{\beta^\prime \in \mathcal{B} : \beta \cup \beta^\prime = \gamma} A_{\beta, \beta^\prime} \leq - \unitCardFunc{\gamma} \text{ for all } \gamma \in \pairwiseUnion{\mathcal{B}} \cap \allStable{G}, \, \gamma \neq \emptyset
    }.
\end{equation}
We will use formulation \eqref{eqn_alphaBCompact} to compute $\alpha^\mathcal{B}(G)$ via the ADMM.

\subsection{The ADMM iterates}
\label{section_admmIterates}
To apply the ADMM to \eqref{eqn_alphaBCompact}, we first reformulate \eqref{eqn_alphaBCompact} as the following optimization problem:
\begin{equation}
\label{eqn_PPsubReform2}
\begin{aligned}
\min_{X, \,  Y \in \symmMatSet^{\abs*{\mathcal{B}}}} \quad & Y_{\emptyset,\emptyset} \\
    \text{s.t. } \quad &  X \in \symmMatSet^{\abs*{\mathcal{B}}}_+, \, \, Y \in \linearConSet{\mathcal{B}}, \, \, X=Y,
    \end{aligned}
\end{equation}
where $\linearConSet{\mathcal{B}}$ is defined in \eqref{eqn_sdpCons}. Given a penalty parameter $\rho > 0$, the augmented Lagrangian associated to \eqref{eqn_PPsubReform2}, with respect to the constraint $X = Y$, is the function
\begin{align} \label{eqn_augmentedLagrange}
    L_\rho\mleft(X,Y,Z \mright) := Y_{\emptyset,\emptyset} + \rho \left\langle Z, Y-X \right\rangle + \frac{\rho}{2} \left\| Y-X \right\|^2,
\end{align}
where $Z \in \symmMatSet^{\abs*{\mathcal{B}}}$ is the
(scaled) dual variable. Given some initial $X^1, Y^1, Z^1 \in \symmMatSet^{\abs*{\mathcal{B}}}$, the ADMM computes the sequence of matrices $\left( X^\ell, Y^\ell, Z^\ell \right)_{\ell \in \mathbb{N}}$, defined recursively as
\begin{equation}
\label{eqn_admmIterates}
\begin{alignedat}{2}
    &X^{\ell+1} &&:= \argmin_{X \succeq 0} L_\rho\mleft(X, Y^\ell,Z^\ell\mright) \\
    &Y^{\ell+1} && := \argmin_{Y \in \linearConSet{\mathcal{B}}} L_\rho \mleft(X^{\ell+1},Y,Z^\ell\mright) \\
     &Z^{\ell+1} &&:= Z^\ell + \stepSize  \left( Y^{\ell+1}-X^{\ell+1} \right) ,
\end{alignedat}
\end{equation}
where $\stepSize \in \mathbb{R}$ is a stepsize parameter. It is known that the matrices $X^\ell$ and $Y^\ell$ converge with rate $\mathcal{O}(1/\ell)$ (in the ergodic sense) to an optimal solution of \eqref{eqn_PPsubReform2} \cite[Thm.~6.5]{he2016convergence} when $\stepSize \in \left(0,\frac{1+\sqrt{5}}{2} \right)$ \cite[Thm.~5.1]{fortin1983chapter}. 

The minimization problems in \eqref{eqn_admmIterates} admit the following closed form solutions, see e.g., \cite[Eq.~3.4]{oliveira2018admm}:
\newcommand{\objMat}{H}
\begin{equation}
\label{eqn_twoProjects}
\begin{aligned}
    &\argmin_{X \succeq 0} L_\rho(X,Y^\ell,Z^\ell)  = \mathcal{P}_{\symmMatSet^{\abs*{\mathcal{B}}}_+}\mleft( Y^\ell + Z^\ell \mright), \\
    &\argmin_{Y \in \linearConSet{\mathcal{B}}} L_\rho(X^{\ell+1}, Y ,Z^\ell)  = \mathcal{P}_{\linearConSet{\mathcal{B}}}\mleft( X^{\ell+1} - \frac{1}{\rho}  \objMat - Z^\ell  \mright),
\end{aligned}
\end{equation}
where $\objMat \in \symmMatSet^{\left| \mathcal{B} \right|}$ is the matrix that is zero everywhere, except for the entry $\objMat_{\emptyset,\emptyset} = 1$. Note that $\langle \objMat, X \rangle = X_{\emptyset,\emptyset}$. The augmented Lagrangian \eqref{eqn_augmentedLagrange} and scheme \eqref{eqn_admmIterates} correspond to the \textit{scaled form} of the ADMM, see e.g., \mbox{\cite[Sect.~3.1.1]{boyd2011distributed}}. Compared to the unscaled form, the scaled form of the ADMM avoids the multiplication of $(1/\rho)$ with $Z^\ell$, see \eqref{eqn_twoProjects}.

We briefly discuss the two projections in \eqref{eqn_twoProjects}. For any $A \in \symmMatSet^{\abs*{\mathcal{B}}}$, 
\begin{align}
\label{eqn_psdProjection}
\mathcal{P}_{\symmMatSet^{\abs*{\mathcal{B}}}_+}\mleft( A \mright) = \sum_{ \lambda \in \spec{A} : \lambda > 0} \lambda u_\lambda u^\top_\lambda,
\end{align}
where $\spec{A}$ denotes the eigenspectrum of $A$, and the vectors $(u_\lambda)_{\lambda \in \spec{A}}$ form an orthonormal basis of eigenvectors. To project a symmetric matrix $Y$ onto $\linearConSet{\mathcal{B}}$, see \eqref{eqn_sdpCons}, we consider $\linearConSet{\mathcal{B}}$ as a set of vectors by identifying $Y$ with its upper triangular entries. To account for the symmetry of $Y$, we replace the terms $Y_{\beta,\beta^\prime} + Y_{\beta^\prime,\beta}$ with $2 Y_{\beta,\beta^\prime}$. From this point of view, it can be seen that $\linearConSet{\mathcal{B}}$ is (up to reordering) a Cartesian product of closed half-spaces, one for each nonempty $\gamma \in \pairwiseUnion{\mathcal{B}} \cap \allStable{G}$. Therefore, projecting onto $\linearConSet{\mathcal{B}}$ is equivalent to projecting onto each half-space separately. Consider such a half-space corresponding to some $\gamma$, defined by $a^\top x \leq -\unitCardFunc{\gamma}$, where $a \in \{1,2\}^p$, for some $p \in \mathbb{N}$. The projection of some $z \in \mathbb{R}^p$ onto this half-space is given by
\begin{align}
\label{eqn_projectProb}
    \argmin_{x \in \mathbb{R}^p : a^\top x \leq -\unitCardFunc{\gamma}} (x-z)^\top \text{Diag}(a) (x-z). 
\end{align}
The presence of $\mathrm{Diag}(a)$ in the objective function ensures that off-diagonal elements of $Y$ are weighted with a factor of 2, as they appear twice in $Y$. The following result shows that \eqref{eqn_projectProb} can be expressed in closed form.
\begin{lemma}
\label{lemma_halfSpaceProject}
Let $p \in \mathbb{N}$, $a ,z \in \mathbb{R}^p$ with $a > 0$, $b \in \mathbb{R}$, and denote by $\mathbf{1}_p \in \mathbb{R}^p$ the all-ones vector. We have that 
\begin{align}
\label{eqn_projectProbResult}
    \argmin_{x \in \mathbb{R}^p : a^\top x \leq b} (x-z)^\top \mathrm{Diag}(a) (x-z) = z - \frac{\max\left\{ a^\top z -b , 0 \right\}
    }{ \mathbf{1}_p^\top a } \mathbf{1}_p.
\end{align}
\end{lemma}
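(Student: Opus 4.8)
The plan is to treat \eqref{eqn_projectProb} as a strictly convex quadratic program over a half-space and to characterize its unique solution through the Karush--Kuhn--Tucker (KKT) conditions. Since $a > 0$, the matrix $\mathrm{Diag}(a)$ is positive definite, so the objective $x \mapsto (x-z)^\top \mathrm{Diag}(a)(x-z)$ is strictly convex; the feasible set $\{x : a^\top x \le b\}$ is a nonempty closed convex half-space. Hence a minimizer exists and is unique, and the KKT conditions are both necessary and sufficient. First I would dispose of the trivial case: if $z$ is feasible, i.e.\ $a^\top z \le b$, then $z$ attains the global unconstrained minimum value $0$, so $x = z$ is optimal; and indeed $\max\{a^\top z - b, 0\} = 0$, so the right-hand side of \eqref{eqn_projectProbResult} collapses to $z$.

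Next I would handle the case $a^\top z > b$, where the constraint must be active at optimality. Introducing a multiplier $\lambda \ge 0$ for $a^\top x \le b$, the stationarity condition reads $2 a_i (x_i - z_i) + \lambda a_i = 0$ for each index $i$. The key simplification is that each $a_i > 0$ cancels, forcing $x_i - z_i = -\lambda/2$ \emph{uniformly in $i$}; equivalently $x = z - (\lambda/2)\mathbf{1}_p$. This is precisely why the correction to $z$ is a scalar multiple of the all-ones vector, regardless of the individual weights $a_i$.

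Substituting this expression into the active constraint $a^\top x = b$ then pins down the multiplier: I would solve $a^\top z - (\lambda/2)\,\mathbf{1}_p^\top a = b$ to obtain $\lambda/2 = (a^\top z - b)/(\mathbf{1}_p^\top a)$, which is positive exactly because we are in the case $a^\top z > b$ and $\mathbf{1}_p^\top a > 0$, so dual feasibility and complementary slackness both hold. Plugging back yields $x = z - \frac{a^\top z - b}{\mathbf{1}_p^\top a}\mathbf{1}_p$, matching \eqref{eqn_projectProbResult} since the maximum equals $a^\top z - b$ in this branch.

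There is no serious obstacle here: the argument is a textbook projection-onto-a-halfspace computation, and strict convexity guarantees that the KKT point we construct is the unique global minimizer. The only point requiring a little care is the bookkeeping of the case distinction, so that the single closed-form expression in \eqref{eqn_projectProbResult} captures both branches; this is handled automatically by the $\max\{\cdot, 0\}$ term, which vanishes precisely when $z$ is already feasible. I would also flag that the hypothesis $a > 0$ (rather than merely $a \ge 0$) is exactly what makes $\mathrm{Diag}(a)$ positive definite and $\mathbf{1}_p^\top a > 0$, thereby securing both uniqueness of the minimizer and nonvanishing of the denominator.
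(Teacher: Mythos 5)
Your proof is correct and follows essentially the same route as the paper's: the same case split on whether $a^\top z \leq b$, and the same KKT/Lagrangian computation in the active case, where cancelling each $a_i > 0$ in the stationarity condition forces the correction to be a multiple of $\mathbf{1}_p$ and the active constraint pins down the multiplier. The only difference is cosmetic --- you make explicit the strict convexity, uniqueness, and necessity-and-sufficiency of the KKT conditions, which the paper leaves implicit in its saddle-point formulation.
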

\begin{proof}
Let $f(x) := (x-z)^\top \mathrm{Diag}(a) (x-z)$. We need to determine the minimizer of the problem 
\begin{align}
    \label{eqn_minimizationProb1}
    \min_{x \in \mathbb{R}^p} f(x) \text{ subject to } a^\top x \leq b.
\end{align}
We consider two cases. If $a^\top z \leq b$, then $z$ minimizes \eqref{eqn_minimizationProb1}, since for any $x \in \R^p$, we have $0 = f(z) \leq f(x)$. Here, the inequality follows from the fact that $a > 0$, which makes $\text{Diag}(a)$ positive definite.

If $a^\top z > b$, we consider the Karush-Kuhn-Tucker (KKT) conditions \cite{kuhn1951nonlinear,karush1939minima} corresponding to \eqref{eqn_minimizationProb1}, which state the following: if $(x^*, \lambda^*)$, with $\lambda^* \geq 0$, is a saddle point of the Lagrangian $L(x,\lambda) := f(x) + \lambda\left( a^\top x - b \right)$, then $x^*$ minimizes \eqref{eqn_minimizationProb1}. The saddle point of $L(x,\lambda)$ is computed as follows:
\begin{align}
    \frac{\partial L(x, \lambda)}{\partial x} = 2 \, \mathrm{Diag}(a) (x-z) + \lambda a = 0 \implies x^* =  z - \frac{\lambda^*}{2} \mathbf{1}_p.
\end{align}
Solving $\partial L(x^*, \lambda) / \partial \lambda = a^\top x^* -b = a^\top(z- \frac{\lambda^*}{2} \mathbf{1}_p ) - b =0$ for $\lambda^*$ yields $\lambda^* = 2\left(a^\top z - b \right) / ( \mathbf{1}_p^\top a) \geq 0$, where the inequality follows from $a^\top z > b$. Then $x^* = z - \frac{a^\top z-b}{\mathbf{1}^\top a} \mathbf{1}_p$ minimizes \eqref{eqn_minimizationProb1} by the KKT conditions. 

The proof follows by combining the two cases into the form \eqref{eqn_projectProbResult}.
\end{proof}

\subsection{Valid bounds on \texorpdfstring{$\alpha(G)$}{a(G)} from the ADMM iterates}
The following lemma shows that any matrix in $\symmMatSet^{\abs*{\mathcal{B}}}_+$ induces an upper bound on $\alpha(G)$. Note that the matrix $X^\ell$ from the ADMM iterates \eqref{eqn_admmIterates} satisfies $X^\ell \in \symmMatSet^{\abs*{\mathcal{B}}}_+$ for all $\ell \in \mathbb{N}$. Thus, any iteration of the ADMM provides an upper bound on $\alpha(G)$.

\newcommand{\psdVarName}{M}
\newcommand{\lBasisName}{\mathcal{B}'}
\begin{lemma}
\label{lemma_posShift}
Let $G = (V,E)$, where $V = [n]$ for some $n \in \mathbb{N}$, $\partialPowerSet{n}{1} \subseteq \mathcal{B} \subseteq \partialPowerSet{n}{n}$, $\psdVarName \in \symmMatSet^{\abs*{\mathcal{B}}}_+$, and let $(f_\beta)_{\beta \in \pairwiseUnion{\mathcal{B}} \cap \allStable{G}}$ satisfy $(\mathbf{x}^{\mathcal{B}})^\top \psdVarName \mathbf{x}^{\mathcal{B}} \equiv \sum_{\beta \in \pairwiseUnion{\mathcal{B}} \cap \allStable{G} } f_\beta x^\beta \mod \edgeIdeal{G}$. We have that the value
\begin{align}   
\label{eqn_lemmaShiftState}
      \lassObj{\psdVarName} := \psdVarName_{\emptyset,\emptyset} + \sum_{\beta \in \pairwiseUnion{\mathcal{B}} \cap \allStable{G} : \beta \neq \emptyset} \max\left\{ f_\beta + \unitCardFunc{\beta},0\right\} 
\end{align}
satisfies $\lassObj{\psdVarName} \geq \alpha(G)$.
\end{lemma}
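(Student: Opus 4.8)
The plan is to unwind the definition $\alpha(G) = \max_{\gamma \in \allStable{G}} \abs*{\gamma}$ and to prove the pointwise bound $\lassObj{\psdVarName} \geq \abs*{\gamma}$ for every stable set $\gamma$; taking the maximum over $\gamma$ then gives the claim. The single fact driving the argument is that $\psdVarName \succeq 0$ makes the quadratic form $g(x) := (\mathbf{x}^{\mathcal{B}})^\top \psdVarName \mathbf{x}^{\mathcal{B}}$ nonnegative at every real point, in particular at each $0/1$ vector $\indicatorVec{\gamma} \in S_G$.

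First I would evaluate the given congruence at $x = \indicatorVec{\gamma}$ for $\gamma \in \allStable{G}$. Because $\edgeIdeal{G}$ vanishes on $S_G$, the relation $(\mathbf{x}^{\mathcal{B}})^\top \psdVarName \mathbf{x}^{\mathcal{B}} \equiv \sum_\beta f_\beta x^\beta \mod \edgeIdeal{G}$ becomes a genuine equality at $\indicatorVec{\gamma}$. Since $x^\beta$ evaluates to $1$ when $\beta \subseteq \gamma$ and to $0$ otherwise, and the constant term of $g$ is $\psdVarName_{\emptyset,\emptyset}$ (so that $f_\emptyset = \psdVarName_{\emptyset,\emptyset}$), this yields
\begin{equation}
g(\indicatorVec{\gamma}) = \psdVarName_{\emptyset,\emptyset} + \sum_{\beta \in \pairwiseUnion{\mathcal{B}} \cap \allStable{G} : \emptyset \neq \beta \subseteq \gamma} f_\beta \geq 0,
\end{equation}
the inequality being nonnegativity of $g$.

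Next I would rewrite $\abs*{\gamma}$ so that it lines up with the definition of $\lassObj{\psdVarName}$. As $\partialPowerSet{n}{1} \subseteq \mathcal{B}$, every singleton $\{i\}$ lies in $\pairwiseUnion{\mathcal{B}} \cap \allStable{G}$ and is the only kind of $\beta$ with $\unitCardFunc{\beta} = 1$; hence $\abs*{\gamma} = \sum_{\beta : \emptyset \neq \beta \subseteq \gamma} \unitCardFunc{\beta}$ over exactly the index set appearing above. I then lower-bound $\lassObj{\psdVarName} - \abs*{\gamma}$ in two moves: discard every term with $\beta \not\subseteq \gamma$ from the sum defining $\lassObj{\psdVarName}$ (legitimate since each summand $\max\{\cdot,0\}$ is nonnegative), and on the retained terms apply $\max\{f_\beta + \unitCardFunc{\beta},0\} \geq f_\beta + \unitCardFunc{\beta}$. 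The $\unitCardFunc{\beta}$ contributions cancel against $\abs*{\gamma}$, leaving precisely $\psdVarName_{\emptyset,\emptyset} + \sum_{\emptyset \neq \beta \subseteq \gamma} f_\beta = g(\indicatorVec{\gamma}) \geq 0$. Thus $\lassObj{\psdVarName} \geq \abs*{\gamma}$ for all $\gamma \in \allStable{G}$, and maximizing over $\gamma$ gives $\lassObj{\psdVarName} \geq \alpha(G)$.

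The two inequalities used above are routine; the only place demanding care is the index bookkeeping. I must check that $\emptyset$ and all singletons genuinely belong to $\pairwiseUnion{\mathcal{B}} \cap \allStable{G}$ (which follows from $\partialPowerSet{n}{1} \subseteq \mathcal{B}$ together with the stability of single vertices), so that both $f_\emptyset = \psdVarName_{\emptyset,\emptyset}$ and $\abs*{\gamma} = \sum_{\emptyset \neq \beta \subseteq \gamma} \unitCardFunc{\beta}$ hold, and that the index set $\{\beta \in \pairwiseUnion{\mathcal{B}} \cap \allStable{G} : \emptyset \neq \beta \subseteq \gamma\}$ retained after discarding terms is exactly the one appearing in the evaluation of $g(\indicatorVec{\gamma})$. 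I expect no genuine obstacle beyond this alignment of index sets.
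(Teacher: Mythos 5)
Your proof is correct, but it follows a genuinely different route from the paper's. The paper argues at the level of polynomials modulo $\edgeIdeal{G}$: it constructs the explicit certificate
\begin{align}
    g(x) = (\mathbf{x}^{\mathcal{B}})^\top M \mathbf{x}^{\mathcal{B}} + \sum_{\beta \neq \emptyset}\max\left\{f_\beta+\unitCardFunc{\beta},0\right\}\left(1-x^\beta\right) + \sum_{\beta \neq \emptyset}\max\left\{-f_\beta-\unitCardFunc{\beta},0\right\} x^\beta ,
\end{align}
observes that $g$ is nonnegative on $S_G$ and hence $g \in \posOnG{G}$, computes that every nonconstant coefficient of $g$ modulo $\edgeIdeal{G}$ equals $-\unitCardFunc{\beta}$, so that $g \equiv \lassObj{M} - \sum_{i \in [n]} x_i \mod \edgeIdeal{G}$, and then concludes $\lassObj{M} \geq \alpha(G)$ by invoking the reformulation \eqref{eqn_popReform}. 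You instead prove the pointwise bound $\lassObj{M} \geq \abs*{\gamma}$ for each stable set $\gamma$ by evaluating the congruence at $\indicatorVec{\gamma}$ and maximizing over $\gamma$; this effectively inlines the elementary proof of \eqref{eqn_popReform} and dispenses with the certificate altogether. In particular, because you only need an inequality at each point, you never require the paper's second family of corrections $\max\{-f_\beta-\unitCardFunc{\beta},0\} x^\beta$: dropping the terms with $\beta \not\subseteq \gamma$ and using $\max\{f_\beta+\unitCardFunc{\beta},0\} \geq f_\beta+\unitCardFunc{\beta}$ on the rest suffices. What the paper's route buys is an explicit element of $\posOnG{G}$, consistent with the SOS framework of \Cref{section_lassHierarc}; what yours buys is a shorter, self-contained argument directly from $\alpha(G) = \max_{\gamma \in \allStable{G}} \abs*{\gamma}$. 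The one step you flag but do not fully justify, $f_\emptyset = M_{\emptyset,\emptyset}$, is genuine but harmless: expanding the quadratic form modulo $\edgeIdeal{G}$ gives $f_\gamma = \sum_{\beta,\beta' \in \mathcal{B} \,:\, \beta\cup\beta'=\gamma} M_{\beta,\beta'}$, and the only pair with empty union is $(\emptyset,\emptyset)$; this identification is licit because the monomials $x^\beta$, $\beta \in \allStable{G}$, are linearly independent modulo $\edgeIdeal{G}$ (so the coefficients $f_\beta$ are unique), and the paper's own proof makes the same silent identification when it writes $g_\emptyset = M_{\emptyset,\emptyset} + \sum_{\beta \neq \emptyset}\max\{f_\beta+\unitCardFunc{\beta},0\}$.
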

\begin{proof}

For notational convenience, we define $\lBasisName := \pairwiseUnion{\mathcal{B}} \cap \allStable{G}$. Consider the polynomial
\begin{equation}
     g(x) :=  (\mathbf{x}^{\mathcal{B}})^\top \psdVarName \mathbf{x}^{\mathcal{B}}  + \sum_{\beta \in \lBasisName : \beta \neq \emptyset} \max\left\{ f_\beta + \unitCardFunc{\beta},0\right\}\mleft(1-x^\beta \mright) + \sum_{\beta \in \lBasisName : \beta \neq \emptyset} \max\left\{ -f_\beta -\unitCardFunc{\beta},0 \right\} x^\beta.
\end{equation} 
Observe that for any $x \in S_G$, $g(x) \geq 0$ since $\psdVarName \succeq 0$, and $1-x^\beta, x^\beta \geq 0$. Hence, $g \in \posOnG{G}$. 

Let $(g_\beta)_{\beta \in \lBasisName}$ be the coefficients of $g$, i.e., $g \equiv \sum_{\beta \in \lBasisName} g_\beta x^\beta \mod \edgeIdeal{G}$. For nonempty $\beta \in \lBasisName$, we have
\begin{align}
    g_\beta = f_\beta - \max\left\{f_\beta + \unitCardFunc{\beta},0\right\} + \max\left\{ -f_\beta - \unitCardFunc{\beta},0 \right\} = -\unitCardFunc{\beta}.
\end{align}
Thus, $g(x) \equiv g_\emptyset - \sum_{i \in [n]} x_i \mod \edgeIdeal{G}$, where $g_\emptyset$ is the constant term of $g$, given by $g_\emptyset = \psdVarName_{\emptyset,\emptyset} +  \sum_{\beta \in \lBasisName : \beta \neq \emptyset} \max\left\{ f_\beta + \unitCardFunc{\beta},0\right\}$. By \eqref{eqn_popReform} and the fact that $g \in \posOnG{G}$, it follows that $g_\emptyset \geq \alpha(G)$. Since $g_\emptyset = \lassObj{\psdVarName}$, this proves the claim.
\end{proof}
Thus, it follows from \Cref{lemma_posShift} that for  $X^\ell$ as in \eqref{eqn_admmIterates} and $v$ as in \eqref{eqn_lemmaShiftState}, $\lassObj{X^\ell}$ is an upper bound on the stability number.

\section{A dynamic basis selection method and ADMM initialization}
\label{section_dynBasisWarmStart}
We provide a method for selecting a basis $\partialPowerSet{n}{1} \subseteq \mathcal{B} \subseteq \partialPowerSet{n}{2}$ for computing $\alpha^{\mathcal{B}}(G)$, see \eqref{eqn_alphaBDef}. We also provide a method for initializing the ADMM iterates $X^1$, $Y^1$, $Z^1$, see \eqref{eqn_admmIterates}. Both these methods are based on an SDP defining the Lov{\'a}sz theta function of a graph $G = ([n],E)$, with $n \in \N$. This SDP is given by:
\begin{equation}
\label{eqn_standardLovasz}
\begin{aligned}
    \vartheta(G) =& \max_{Z \in \symmMatSet^{1+n}_+} \, \,
 &&\sum_{i \in [n]} Z_{\emptyset, i} \\
    &\quad \text{s.t.} &&Z_{i,j} = 0 \text{ for all } \edge{i}{j} \in E \\
    &   && Z_{i,i} = Z_{\emptyset,i} \text{ for all } i \in [n], \, \, Z_{\emptyset,\emptyset} = 1.
\end{aligned}
\end{equation}
Here, the matrix $Z$ is indexed by the $1+n$ elements of $\partialPowerSet{n}{1}$ (see also \cite[Sect.~3.1]{gouveia2010theta}). It can be shown that \eqref{eqn_standardLovasz} is dual to the SDP defining $\alpha^{\partialPowerSet{n}{1}}(G)$, see \eqref{eqn_alphaBCompact}. To solve \eqref{eqn_standardLovasz}, we use the IPM-based SDP solver MOSEK \cite{mosek}. The PSD variable in \eqref{eqn_standardLovasz} is of order $1+n$ and there are $2|E| + 2n+1$ linear equality constraints, which is small enough for IPM-based solvers to be efficient for graphs of sizes considered here. For instance, among the graphs we consider, \texttt{c\_fat200\_5} attains the largest value of $2|E|+2n+1$, with $|E|=11427$ and $n = 200$ (see \Cref{table_gaarTable}). For this graph, MOSEK solves \eqref{eqn_standardLovasz} in approximately 35 seconds.

\subsection{The basis selection method}
\label{section_choosingB}
Given a graph $G$ on $n$ vertices, and a maximum basis size $s \geq 1+n$, our method aims to select a basis $\partialPowerSet{n}{1} \subseteq \mathcal{B} \subseteq \partialPowerSet{n}{2}$, $\abs*{\mathcal{B}} \leq s$
that minimizes $\alpha^{\mathcal{B}}(G)$. The inclusions $\partialPowerSet{n}{1} \subseteq \mathcal{B} \subseteq \partialPowerSet{n}{2}$ can be interpreted in terms of $\linearConSet{\mathcal{B}}$, see \eqref{eqn_sdpCons}, as follows: matrices in $\linearConSet{\partialPowerSet{n}{1}}$ are submatrices of matrices in $\linearConSet{\mathcal{B}}$, which in turn are submatrices of matrices in $\linearConSet{\partialPowerSet{n}{2}}$. 

Recall that the first and second levels of the Lasserre hierarchy for the stable set problem are the SDPs defining $\alpha^{\partialPowerSet{n}{1}}(G)$ and $\alpha^{\partialPowerSet{n}{2}}(G)$ respectively. Thus, our method selects a basis $\mathcal{B}$ such that $\alpha^{\mathcal{B}}(G)$ corresponds to a level of the Lasserre hierarchy intermediate to levels $1$ and $2$, and for some smaller graphs, equal to level $2$. We do not consider bases corresponding to levels $k > 2$, since the value $\alpha^{\partialPowerSet{n}{2}}(G)$, after rounding down to the nearest integer, often closes the gap with $\alpha(G)$, or is intractable to compute.

To explain our method, we define, for $\beta \subseteq [n]$, the binary matrix $Z^\beta := \begin{bmatrix}
    1 \\ \indicatorVec{\beta}
\end{bmatrix} \begin{bmatrix}
    1 \\ \indicatorVec{\beta}
\end{bmatrix}^\top$, indexed by the $1+n$ elements of $\partialPowerSet{n}{1}$. Observe that $Z^\beta$ is feasible for \eqref{eqn_standardLovasz} if and only if $\beta \in \allStable{G}$. Let $\beta \in \allStable{G}$ correspond to a maximum stable set, and consider the binary values $(Z^\beta_{i,j})_{\{i,j\} \notin E}$. Observe that $Z^\beta_{i,j} = 1$ if and only if both vertices $i,j \in \beta$. Thus, the values $1$ in the vector $(Z^\beta_{i,j})_{\{i,j\} \notin E}$ indicate the maximum stable set $\beta$. As such, we would like to include the monomials $x_i x_j$, for which $Z^\beta_{i,j} = 1$, in our basis. In practice however, the matrix $Z^\beta_{i,j}$ is unknown, since a maximum stable set is not known. Therefore, instead of $Z^\beta$, we consider matrix $Z^*$, which denotes an optimal solution of \eqref{eqn_standardLovasz}, and can be considered a semidefinite approximation of $Z^\beta$. Then, we include monomial $x_ix_j$, $\edge{i}{j} \notin E$, in our basis if $Z^*_{i,j}$ is `large enough'.

We now present our basis selection method formally: Compute first $s'$, defined as the cardinality of $\partialPowerSet{n}{2} \setminus \left( \partialPowerSet{n}{1} \cup E \right)$, which is the set of non-edges in $G$. Then we distinguish two cases, based on the given maximum basis size $s$. \\
\textbf{Case 1: $s < 1+n+s'$.}
 Solve \eqref{eqn_standardLovasz} using an IPM-based SDP solver to obtain an optimal solution $Z^*$.  As basis $\mathcal{B}$, we choose the sets in $\partialPowerSet{n}{1}$ and the $s-(1+n)$ non-edges $\{i,j\}$ with largest value $Z^*_{i,j}$. Note that $\abs*{\mathcal{B}} = \left|\partialPowerSet{n}{1}\right|+s-(1+n) = s$. \\
 \textbf{Case 2: $s \geq 1+n+s'$.} We take the basis $\mathcal{B} = \partialPowerSet{n}{2} \setminus E$, of size $\abs*{\mathcal{B}} =1+n+s' \leq s$. Observe that $\mathcal{B} = \partialPowerSet{n}{2} \cap \allStable{G}$, which implies that $\alpha^{\mathcal{B}}(G) = \alpha^{\partialPowerSet{n}{2}}(G)$, see \Cref{lemma_sizeReduction}. Therefore, in Case 2, the resulting basis corresponds to the second level of the Lasserre hierarchy.

Preliminary numerical experiments showed that selecting monomials based on the largest values of $(Z_{i,j}^*)_{ \edge{i}{j} \notin E}$, outperformed methods that selected monomials based on smallest values in $(Z_{i,j}^*)_{ \edge{i}{j} \notin E}$, or those values closest to the average $\sum_{\edge{i}{j} \in E} Z^*_{i,j} / |E|$. We also tested basis selection methods based on the values $(Z^*_{\emptyset,i})_{i \in [n]}$, or degrees of vertices, and these were also outperformed by the above described basis selection method.

\subsection{Initialization of ADMM iterates}
\label{section_admmInitialize}
Our initialization method is defined for any basis $\partialPowerSet{n}{1} \subseteq \mathcal{B} \subseteq \partialPowerSet{n}{n}$, and depends on optimal primal and dual solutions to \eqref{eqn_standardLovasz}, denoted respectively by $Z^* \in \symmMatSet^{1+n}_+$ and $X^* \in \symmMatSet^{1+n}_+$. 

Note that the initial ADMM iterates $X^1$, $Y^1$, $Z^1$ are indexed by the elements of $\mathcal{B}$. Our initialization method sets the principal submatrix of $X^1$, that is indexed by the elements of $\partialPowerSet{n}{1}$, equal to $X^*$. We set the other elements of $X^1$ to zero, and set $Y^1 = X^1$. We initialize $Z^1$ by setting the principal submatrix of $Z^1$ corresponding to the elements of $\partialPowerSet{n}{1}$ as $(1/\rho) Z^*$, and the rest all zero. Here, we scale $Z^*$ by $1/\rho$, since \eqref{eqn_admmIterates} corresponds to the scaled form of the ADMM.

An important property of this initialization is that $\lassObj{X^1}$, see \eqref{eqn_lemmaShiftState}, equals $\vartheta(G)$. Indeed, since $X^* \succeq 0$, also $X^1 \succeq 0$. Moreover, since $X^*$ is a feasible dual solution to \eqref{eqn_standardLovasz}, and \eqref{eqn_standardLovasz} is the SDP dual of \eqref{eqn_alphaBCompact} for $\mathcal{B} = \partialPowerSet{n}{1}$, it follows that $X^* \in \linearConSet{ \partialPowerSet{n}{1}}$. This implies that the values $f_\beta$ in \eqref{eqn_lemmaShiftState}, corresponding to $X^1$, satisfy $f_\beta \leq -\unitCardFunc{\beta}$, from where it follows that $\lassObj{X^1} = X^1_{\emptyset,\emptyset}$. Lastly, since $X^*$ is an optimal dual solution to \eqref{eqn_standardLovasz}, $X^*_{\emptyset,\emptyset} = X^1_{\emptyset,\emptyset} = \vartheta(G)$. Thus, $\lassObj{X^1} = \vartheta(G)$. Consequently, the best bound returned by the ADMM after a finite number of iterations is at most $\vartheta(G)$.

\section{Computational results}
\label{section_numericalResults}
We compute \LassHierarc{} bounds for the stable set problem, using the ADMM as described in \Cref{section_admmIterates}, with stepsize parameter $\stepSize = 3/2$, see \eqref{eqn_admmIterates}.
We set the ADMM penalty parameter $\rho = (4/5)\sqrt{\abs*{\mathcal{B}}}$, see \eqref{eqn_augmentedLagrange}, where $\mathcal{B}$ is the used basis of size at most $s \in \N$, determined by the basis selection method outlined in \Cref{section_choosingB}. The algorithms are implemented in MATLAB. All experiments are run on a machine with 16GB RAM and an Intel i7-1165G7 CPU.

In \Cref{numerics:duble-single} we compare SDP bounds obtained from two versions of the ADMM algorithm: one that computes the \eigenDecomp{} in \eqref{eqn_psdProjection} using single precision, and another that uses double precision. {We conclude that the single precision ADMM requires less computation time per iteration, without a significant loss in quality of the corresponding bounds.}
The stopping conditions are provided 
in \Cref{section_stoppingCond}, and are used in \Cref{sect:numericBounds} to compute bounds on $\alpha(G)$ from the Lasserre hierarchy  with the single precision ADMM. We compare  these bounds with the bounds obtained by the exact subgraph hierarchy (ESH) \cite{gaar2020computational} and the bounds from \cite{pucher2024class}. Both approaches provide among the strongest SDP bounds for the stable set problem.

\subsection{Single vs.~double precision for \eigenDecomp{}s} \label{numerics:duble-single}
It is well-known that one of the most expensive steps of the ADMM, when applied to SDP, is computing projections onto the PSD cone, see e.g., \cite[Sect.~5]{oliveira2018admm} and \cite[Sect.~1]{rontsis2022efficient}. It is standard to compute these projections $\mathcal{P}_{\symmMatSet^{\abs*{\mathcal{B}}}_+}\mleft( A \mright)$, see \eqref{eqn_psdProjection}, by computing the full \eigenDecomp{} of $A$, that is, $A = \sum_{\lambda \in \spec{A}} \lambda u_\lambda u^\top_\lambda$. Computing the \eigenDecomp{} in single precision is computationally less expensive than in double precision. {However, the lower accuracy of single precision might result in worse upper bounds compared to double precision. We investigate this trade-off by comparing} the SDP bounds on the stable set problem obtained by two versions of the ADMM algorithm: one that uses single precision and another that uses double precision for the \eigenDecomp{}s. All other parts of the algorithms remain the same.

We run each version of the ADMM algorithm for a fixed number of iterations to compute bounds on  $\alpha(G)$  for three different graphs. All ADMM iterates, see \eqref{eqn_admmIterates}, are initialized by setting $X^1=Y^1=Z^1=0$, i.e., the zero matrix of appropriate size. When the iteration number $\ell$ is a multiple of 100, we compute the bound $\lassObj{X^\ell}$, see \eqref{eqn_lemmaShiftState}, which satisfies $\lassObj{X^\ell} \geq \alpha(G)$. We also track the computation time, and present the results in \Cref{table_singleVDouble,table_singleVDouble2,table_singleVDouble3}. In these tables, column `\colNameEigs{}' (for bound difference) reports the double precision bound $\lassObj{X^\ell}$ subtracted from the single precision $\lassObj{X^\ell}$.

From the data presented in these tables we conclude that the difference in $\lassObj{X^\ell}$ between the two precisions and for fixed $\ell$, is negligible (at most 0.04517). In contrast, the reduction in computation time may be significant, see for instance the row corresponding to $\ell = 600$ in \Cref{table_singleVDouble2}: the single precision ADMM requires 113.82 seconds to compute 600 iterations, whereas the double precision ADMM requires 205.35 seconds. With this larger computation time, the double precision bound $\lassObj{X^\ell}$ is only 0.00677 smaller than the single precision bound. Rounded down, both precisions provide the same bound on $\alpha(G)$, but the single precision ADMM requires only 55\% of the computation time of the double precision ADMM. 

We also perform the following similar experiment: we run each ADMM version for one hour on a single graph. We pick a basis of size $s = 2500$  and initialize the ADMM iterates with the methods outlined in \Cref{section_choosingB,section_admmInitialize}, respectively. At the first iteration and every 10 seconds $t$, we compute the valid upper bound $v_t := \lassObj{X^\ell}$, where $\ell$ is the iteration index at time $t$. We set $v_0 := \lassObj{X^1} = \vartheta(G)$. This yields a set of upper bounds $\{v_0, v_{10}, \dots, v_{3600} \}$. \Cref{fig_singleVDoubleOverTime} reports the best bounds achieved by both ADMM versions over time. That is, \Cref{fig_singleVDoubleOverTime} plots the curves through the points $(t, \min_{k \in \{0,10,\dots,t\}} v_k )$, for $t \in \{0,10,20,\dots,3600\}$. For the single precision version of the ADMM algorithm, the bounds $v_{10}, v_{20}, \dots, v_{80} \geq v_0$, and thus the plot corresponding to single precision is flat for the first 80 seconds. For the double precision variant of the ADMM, the plot is flat for the first 170 seconds. \Cref{fig_singleVDoubleOverTime} demonstrates that the single precision ADMM provides stronger bounds than the double precision ADMM, at any time $t$, $t \leq 3600$. This is due to the fact that the single precision ADMM algorithm can perform more ADMM iterations in the same time as compared to the double precision ADMM algorithm.

Based on these conclusions, we will use the version of the ADMM that computes \eigenDecomp{}s in single precision for the remainder of this section.

\subsection{Stopping conditions}
\label{section_stoppingCond}
For each graph, we run the ADMM algorithm for at most one hour (unless otherwise specified). Next to maximum running time, we have the following stopping conditions: we stop if
\begin{align}
    \label{eqn_stoppingCondition}
    \max \left\{ \frac{ \left\| X^\ell - Y^\ell \right\|}{ 1+ \left\| X^\ell \right\|}, \rho \frac{ \left\| X^{\ell-1} - X^{\ell}   \right\|}{ 1+ \left\| X^\ell \right\|}\right\} \leq 10^{-4},
\end{align}
for 3 consecutive iterations, see e.g., \cite[Sect.~3.3.1]{boyd2011distributed}. To keep computation costs minimal, we only verify whether  \eqref{eqn_stoppingCondition} holds whenever the iteration number $\ell$ is a multiple of 100 (and then also for the consecutive iterations if \eqref{eqn_stoppingCondition} holds). We also stop earlier if the objective value $X^\ell_{\emptyset, \emptyset}$ stagnates. Specifically, we stop if 
\begin{align}
    \label{eqn_objStagnation}
    \abs*{ X^\ell_{\emptyset,\emptyset} - X^{\ell-1}_{\emptyset,\emptyset} } < 10^{-5}
\end{align}
for $K_\text{stag} \in \mathbb{N}$ (not necessarily consecutive) iterations. For all the tables that follow, except \Cref{table_moreCPU}, we set $K_\mathrm{stag} = 150$.

\newcommand{\tableDescr}[4]{Comparison of upper bounds for \texttt{#1}, $n=#2$, $\alpha(G) = #3$, $s = #4$.}
\begin{table}[ht]
\centering
\begin{tabular}{l|rr|rr|r}
\hline
\multicolumn{1}{c|}{\multirow{2}{*}{$\ell$}} & \multicolumn{2}{c|}{Single precision} & \multicolumn{2}{c|}{Double precision} & \multicolumn{1}{c}{\multirow{2}{*}{\colNameEigs{}}} \\
\multicolumn{1}{c|}{} & $\lassObj{X^\ell}$ & Time (s) & $\lassObj{X^\ell}$ & Time (s) & \multicolumn{1}{c}{} \\ \hline
100 & 7.13476 & 0.14 & 7.13484 & 0.16 & -0.00008 \\
200 & 7.03090 & 0.25 & 7.03090 & 0.32 & 0.00000 \\
300 & 7.07006 & 0.36 & 7.07013 & 0.47 & -0.00007 \\
400 & 7.00445 & 0.46 & 7.00444 & 0.61 & 0.00000 \\
500 & 7.00680 & 0.57 & 7.00677 & 0.76 & 0.00004 \\
600 & 7.00032 & 0.68 & 7.00032 & 0.91 & 0.00001 \\ \hline
\end{tabular}
\caption{\tableDescr{HoG\_15599}{20}{7}{126}}
\label{table_singleVDouble3}
\end{table}

\begin{table}[ht]
\centering
\begin{tabular}{l|rr|rr|r}
\hline
\multicolumn{1}{c|}{\multirow{2}{*}{$\ell$}} & \multicolumn{2}{c|}{Single precision} & \multicolumn{2}{c|}{Double precision} & \multicolumn{1}{c}{\multirow{2}{*}{\colNameEigs{}}} \\
\multicolumn{1}{c|}{} & $\lassObj{X^\ell}$ & Time (s) & $\lassObj{X^\ell}$ & Time (s) & \multicolumn{1}{c}{} \\ \hline
100 & 20.92761 & 5.14 & 20.92756 & 7.99 & 0.00005 \\
200 & 23.01029 & 9.97 & 23.01063 & 16.05 & -0.00035 \\
300 & 17.21289 & 15.28 & 17.21270 & 24.45 & 0.00019 \\
400 & 16.42331 & 21.03 & 16.42139 & 33.03 & 0.00192 \\
500 & 16.51678 & 26.66 & 16.51551 & 41.41 & 0.00127 \\
600 & 16.30446 & 32.15 & 16.30274 & 50.83 & 0.00172 \\
700 & 16.49339 & 37.60 & 16.49267 & 59.29 & 0.00072 \\
800 & 16.29119 & 43.19 & 16.28875 & 67.88 & 0.00245 \\
900 & 16.39126 & 49.01 & 16.38704 & 76.83 & 0.00421 \\
1000 & 16.28313 & 54.91 & 16.27664 & 85.77 & 0.00649 \\
1100 & 16.31098 & 62.01 & 16.30117 & 94.50 & 0.00981 \\ \hline
\end{tabular}
\caption{\tableDescr{MANN\_a9\_clq}{45}{16}{964}}
\label{table_singleVDouble}
\end{table}

\begin{table}[ht]
\centering
\begin{tabular}{l|rr|rr|r}
\hline
\multicolumn{1}{c|}{\multirow{2}{*}{$\ell$}} & \multicolumn{2}{c|}{Single precision} & \multicolumn{2}{c|}{Double precision} & \multicolumn{1}{c}{\multirow{2}{*}{\colNameEigs{}}} \\
\multicolumn{1}{c|}{} & $\lassObj{X^\ell}$ & Time (s) & $\lassObj{X^\ell}$ & Time (s) & \multicolumn{1}{c}{} \\ \hline
100 & 122.53210 & 19.66 & 122.52626 & 35.47 & 0.00584 \\
200 & 82.37548 & 41.58 & 82.37179 & 70.49 & 0.00368 \\
300 & 78.23423 & 59.13 & 78.22998 & 105.84 & 0.00425 \\
400 & 78.89802 & 76.84 & 78.86581 & 139.59 & 0.03221 \\
500 & 71.93651 & 96.10 & 71.89134 & 173.73 & 0.04517 \\
600 & 69.47637 & 113.82 & 69.46960 & 205.35 & 0.00677 \\ \hline
\end{tabular}
\caption{\tableDescr{evil-N150-p98-myc5x30}{150}{60}{1500}}
\label{table_singleVDouble2}
\end{table}

\begin{figure}
    \centering
\begin{tikzpicture}[trim axis left, trim axis right]
\begin{axis}[
    axis lines=left,
    grid=both,
    ylabel style={rotate=-90},
    xlabel={Time (minutes)},
    ylabel style={align=center},
    ylabel={Best upper bound\\on $\alpha(G)$},
    yticklabel style={/pgf/number format/.cd,
    fixed, fixed zerofill,   /pgf/number format/precision=3},
    xmin=0, xmax=60,
    ymin=17,
    ymax=20.5,
    yticklabel style={/pgf/number format/fixed,
                  /pgf/number format/precision=1},
    ytick={17,17.5,18,18.5,19,19.5,20,20.5}, %
    legend pos=north east,
    extra y ticks={17.5,18.02},
    extra y tick style={grid=none,ticks=major,ticklabel pos=right},
    extra y tick labels={$\! 17.46$,$\! 18.02$},
]

\addplot[
    color=black,
    thick
    ]
    coordinates {(0.04,20.23535)(0.19,20.23535)(0.34,20.23535)(0.52,20.23535)(0.68,20.23535)(0.84,20.23535)(1.02,20.23535)(1.18,20.23535)(1.34,20.23535)(1.53,20.23535)(1.68,20.23535)(1.84,20.23535)(2.01,20.23535)(2.17,20.23535)(2.34,20.23535)(2.51,20.23535)(2.68,20.23535)(2.84,20.23535)(3.00,20.13387)(3.18,20.06597)(3.34,20.06597)(3.50,20.06597)(3.69,20.06597)(3.85,20.06597)(4.01,20.06597)(4.19,20.06597)(4.35,20.05977)(4.50,20.00277)(4.67,19.95211)(4.85,19.92292)(5.01,19.91205)(5.17,19.90192)(5.36,19.88753)(5.52,19.87787)(5.68,19.87156)(5.84,19.86341)(6.02,19.84638)(6.18,19.82665)(6.34,19.80677)(6.50,19.79002)(6.69,19.77389)(6.85,19.76161)(7.00,19.75029)(7.19,19.73839)(7.34,19.72909)(7.51,19.71966)(7.69,19.70769)(7.85,19.69668)(8.01,19.68535)(8.17,19.67403)(8.35,19.66113)(8.52,19.65044)(8.67,19.64012)(8.86,19.62844)(9.01,19.61853)(9.17,19.60853)(9.35,19.59671)(9.52,19.58654)(9.68,19.57647)(9.86,19.56499)(10.02,19.55539)(10.17,19.54599)(10.35,19.53519)(10.51,19.52603)(10.67,19.51693)(10.86,19.50635)(11.01,19.49734)(11.18,19.48840)(11.33,19.47956)(11.52,19.46936)(11.68,19.46070)(11.84,19.45209)(12.00,19.44350)(12.18,19.43349)(12.34,19.42488)(12.50,19.41626)(12.68,19.40619)(12.84,19.39757)(13.00,19.38900)(13.18,19.37909)(13.34,19.37069)(13.50,19.36240)(13.69,19.35282)(13.84,19.34470)(14.03,19.33528)(14.18,19.32727)(14.34,19.31929)(14.53,19.31003)(14.69,19.30215)(14.85,19.29432)(15.00,19.28655)(15.19,19.27753)(15.34,19.26985)(15.50,19.26220)(15.69,19.25331)(15.86,19.24570)(16.01,19.23811)(16.17,19.23054)(16.35,19.22174)(16.51,19.21422)(16.67,19.20673)(16.86,19.19806)(17.01,19.19066)(17.17,19.18332)(17.35,19.17480)(17.51,19.16756)(17.67,19.16036)(17.86,19.15201)(18.02,19.14490)(18.18,19.13783)(18.34,19.13080)(18.50,19.12380)(18.69,19.11569)(18.85,19.10877)(19.00,19.10189)(19.18,19.09390)(19.34,19.08708)(19.53,19.07916)(19.69,19.07240)(19.84,19.06566)(20.03,19.05783)(20.18,19.05115)(20.34,19.04451)(20.50,19.03789)(20.69,19.03021)(20.85,19.02366)(21.01,19.01714)(21.19,19.00958)(21.35,19.00314)(21.51,18.99673)(21.67,18.99036)(21.85,18.98295)(22.01,18.97664)(22.17,18.97036)(22.35,18.96307)(22.51,18.95686)(22.67,18.95067)(22.85,18.94348)(23.01,18.93735)(23.17,18.93124)(23.35,18.92416)(23.51,18.91811)(23.67,18.91209)(23.86,18.90511)(24.02,18.89914)(24.18,18.89320)(24.33,18.88729)(24.52,18.88042)(24.69,18.87456)(24.85,18.86873)(25.00,18.86293)(25.19,18.85619)(25.35,18.85046)(25.51,18.84474)(25.67,18.83906)(25.83,18.83340)(26.02,18.82682)(26.18,18.82121)(26.36,18.81470)(26.52,18.80914)(26.68,18.80360)(26.84,18.79810)(27.02,18.79170)(27.18,18.78624)(27.34,18.78080)(27.52,18.77449)(27.68,18.76911)(27.84,18.76376)(28.02,18.75754)(28.19,18.75223)(28.35,18.74695)(28.50,18.74168)(28.69,18.73557)(28.85,18.73035)(29.00,18.72515)(29.19,18.71911)(29.35,18.71397)(29.51,18.70884)(29.67,18.70374)(29.85,18.69780)(30.01,18.69274)(30.17,18.68770)(30.35,18.68185)(30.51,18.67686)(30.67,18.67189)(30.85,18.66612)(31.01,18.66120)(31.19,18.65548)(31.35,18.65060)(31.50,18.64575)(31.69,18.64012)(31.84,18.63531)(32.02,18.62972)(32.18,18.62495)(32.34,18.62020)(32.52,18.61468)(32.68,18.60997)(32.83,18.60528)(33.02,18.59984)(33.18,18.59519)(33.36,18.58980)(33.52,18.58520)(33.67,18.58062)(33.86,18.57530)(34.01,18.57075)(34.17,18.56622)(34.36,18.56097)(34.52,18.55647)(34.68,18.55200)(34.84,18.54754)(35.02,18.54236)(35.18,18.53795)(35.34,18.53355)(35.50,18.52917)(35.69,18.52409)(35.85,18.51975)(36.00,18.51542)(36.19,18.51040)(36.34,18.50612)(36.50,18.50185)(36.69,18.49689)(36.85,18.49266)(37.01,18.48844)(37.17,18.48424)(37.35,18.47936)(37.51,18.47520)(37.69,18.47036)(37.84,18.46624)(38.00,18.46213)(38.19,18.45735)(38.35,18.45327)(38.51,18.44921)(38.69,18.44449)(38.85,18.44046)(39.01,18.43645)(39.17,18.43246)(39.35,18.42781)(39.50,18.42385)(39.69,18.41924)(39.86,18.41532)(40.02,18.41205)(40.17,18.40880)(40.36,18.40490)(40.52,18.40167)(40.68,18.39846)(40.85,18.39461)(41.01,18.39077)(41.19,18.38631)(41.35,18.38251)(41.51,18.37872)(41.68,18.37495)(41.84,18.37119)(42.02,18.36681)(42.18,18.36308)(42.33,18.35936)(42.52,18.35504)(42.68,18.35135)(42.83,18.34768)(43.02,18.34340)(43.18,18.33976)(43.36,18.33552)(43.52,18.33191)(43.68,18.32830)(43.83,18.32471)(44.02,18.32054)(44.17,18.31698)(44.36,18.31285)(44.51,18.30932)(44.67,18.30580)(44.86,18.30172)(45.02,18.29822)(45.17,18.29475)(45.35,18.29071)(45.51,18.28726)(45.67,18.28382)(45.85,18.27983)(46.01,18.27642)(46.19,18.27245)(46.35,18.26906)(46.51,18.26569)(46.67,18.26232)(46.85,18.25841)(47.01,18.25508)(47.17,18.25175)(47.35,18.24789)(47.51,18.24459)(47.67,18.24131)(47.86,18.23749)(48.02,18.23423)(48.18,18.23098)(48.34,18.22774)(48.52,18.22398)(48.68,18.22076)(48.84,18.21756)(49.02,18.21384)(49.18,18.21066)(49.34,18.20748)(49.50,18.20433)(49.68,18.20067)(49.84,18.19754)(50.00,18.19442)(50.19,18.19079)(50.34,18.18769)(50.50,18.18461)(50.68,18.18102)(50.84,18.17796)(51.00,18.17491)(51.18,18.17136)(51.34,18.16833)(51.52,18.16480)(51.68,18.16179)(51.84,18.15879)(52.02,18.15531)(52.18,18.15233)(52.34,18.14937)(52.52,18.14593)(52.68,18.14299)(52.83,18.14005)(53.02,18.13664)(53.18,18.13373)(53.36,18.13034)(53.52,18.12745)(53.67,18.12458)(53.86,18.12123)(54.01,18.11838)(54.17,18.11553)(54.35,18.11222)(54.51,18.10940)(54.67,18.10658)(54.86,18.10331)(55.01,18.10051)(55.18,18.09772)(55.34,18.09495)(55.52,18.09172)(55.68,18.08897)(55.84,18.08622)(56.00,18.08348)(56.19,18.08029)(56.34,18.07757)(56.50,18.07486)(56.69,18.07171)(56.85,18.06902)(57.01,18.06633)(57.19,18.06322)(57.34,18.06055)(57.50,18.05789)(57.68,18.05480)(57.84,18.05217)(58.01,18.04954)(58.19,18.04648)(58.35,18.04387)(58.51,18.04128)(58.69,18.03826)(58.85,18.03569)(59.01,18.03311)(59.19,18.03012)(59.35,18.02757)(59.50,18.02502)(59.69,18.02207)(59.85,18.01954)(60.01,18.01702) };

\addplot[
    color=black,
    thick,
    dotted
    ]
    coordinates {
        (0.04,20.23535)(0.17,20.23535)(0.34,20.23535)(0.51,20.23535)(0.67,20.23535)(0.84,20.23535)(1.01,20.23535)(1.18,20.23535)(1.34,20.23535)(1.51,20.13456)(1.67,20.13456)(1.84,20.13456)(2.00,20.12941)(2.17,20.04333)(2.34,20.01368)(2.51,19.98595)(2.68,19.97397)(2.84,19.93030)(3.00,19.91971)(3.17,19.86185)(3.34,19.85288)(3.51,19.83933)(3.68,19.81005)(3.83,19.78046)(4.00,19.75140)(4.17,19.73306)(4.34,19.71985)(4.51,19.69352)(4.68,19.67222)(4.83,19.66043)(5.00,19.62261)(5.17,19.61109)(5.34,19.57765)(5.51,19.57205)(5.68,19.55693)(5.84,19.52426)(6.01,19.51464)(6.17,19.50358)(6.34,19.46397)(6.51,19.45635)(6.67,19.42349)(6.84,19.41703)(7.00,19.39199)(7.17,19.39199)(7.34,19.36342)(7.50,19.35194)(7.67,19.32678)(7.84,19.32678)(8.00,19.30924)(8.17,19.27734)(8.34,19.26468)(8.51,19.24377)(8.67,19.24114)(8.84,19.21147)(9.00,19.19892)(9.17,19.18719)(9.34,19.16905)(9.51,19.14032)(9.68,19.13306)(9.84,19.11629)(10.01,19.11498)(10.18,19.09508)(10.34,19.05717)(10.51,19.04617)(10.68,19.04390)(10.84,19.03090)(11.01,19.02606)(11.18,19.00003)(11.33,18.98365)(11.51,18.96906)(11.67,18.96389)(11.84,18.95523)(12.00,18.93417)(12.17,18.92098)(12.34,18.92098)(12.51,18.90144)(12.67,18.87848)(12.85,18.87848)(13.01,18.86056)(13.17,18.85434)(13.33,18.83459)(13.50,18.82514)(13.67,18.80797)(13.84,18.80797)(14.00,18.79371)(14.17,18.77361)(14.34,18.77361)(14.51,18.75710)(14.67,18.74766)(14.84,18.72338)(15.01,18.72338)(15.17,18.72177)(15.34,18.69988)(15.50,18.68291)(15.67,18.67839)(15.84,18.67662)(16.00,18.66196)(16.17,18.65475)(16.34,18.63068)(16.51,18.62881)(16.67,18.61735)(16.84,18.60586)(17.00,18.60319)(17.17,18.58230)(17.34,18.57471)(17.50,18.56947)(17.67,18.55206)(17.84,18.55191)(18.01,18.54181)(18.18,18.53194)(18.34,18.52451)(18.50,18.50970)(18.67,18.50820)(18.84,18.48958)(19.01,18.48449)(19.18,18.48253)(19.34,18.46498)(19.51,18.46498)(19.68,18.44213)(19.84,18.44213)(20.01,18.42889)(20.17,18.42666)(20.34,18.42440)(20.50,18.40444)(20.67,18.39603)(20.84,18.39583)(21.00,18.38604)(21.17,18.38604)(21.34,18.36286)(21.51,18.36286)(21.67,18.35296)(21.84,18.34279)(22.01,18.33391)(22.18,18.33368)(22.34,18.31426)(22.51,18.31426)(22.68,18.30176)(22.84,18.30176)(23.01,18.30176)(23.18,18.29204)(23.34,18.27785)(23.51,18.27535)(23.67,18.26687)(23.84,18.25801)(24.01,18.25801)(24.17,18.24275)(24.34,18.23185)(24.51,18.22973)(24.68,18.22483)(24.84,18.20533)(25.01,18.20533)(25.17,18.20533)(25.33,18.20261)(25.50,18.19248)(25.67,18.19248)(25.84,18.18738)(26.00,18.18321)(26.17,18.17230)(26.34,18.17230)(26.50,18.15657)(26.68,18.14739)(26.84,18.14719)(27.00,18.14096)(27.17,18.14096)(27.34,18.12148)(27.50,18.12148)(27.67,18.11973)(27.84,18.11680)(28.00,18.10500)(28.18,18.09859)(28.34,18.08152)(28.51,18.08152)(28.67,18.08152)(28.83,18.07650)(29.01,18.06814)(29.18,18.06814)(29.34,18.05403)(29.51,18.05403)(29.67,18.05365)(29.84,18.05365)(30.00,18.04004)(30.18,18.04004)(30.34,18.04004)(30.51,18.02606)(30.67,18.01970)(30.84,18.01439)(31.00,18.01284)(31.17,18.01284)(31.34,17.99160)(31.50,17.99160)(31.67,17.98529)(31.84,17.98529)(32.00,17.97459)(32.17,17.97459)(32.34,17.97131)(32.51,17.96092)(32.67,17.95497)(32.84,17.95363)(33.00,17.95073)(33.17,17.94332)(33.34,17.94260)(33.51,17.92930)(33.67,17.92930)(33.84,17.92362)(34.00,17.92189)(34.17,17.91901)(34.34,17.91901)(34.50,17.91631)(34.67,17.90253)(34.84,17.89685)(35.00,17.89685)(35.17,17.89557)(35.34,17.89234)(35.50,17.88879)(35.67,17.88114)(35.83,17.87898)(36.01,17.87898)(36.18,17.86274)(36.34,17.86274)(36.51,17.85168)(36.68,17.85168)(36.84,17.84592)(37.01,17.84484)(37.17,17.84484)(37.34,17.83702)(37.51,17.82556)(37.67,17.82556)(37.84,17.82359)(38.00,17.82359)(38.17,17.81786)(38.34,17.81786)(38.50,17.81786)(38.67,17.79533)(38.84,17.79533)(39.01,17.79129)(39.17,17.79129)(39.34,17.79129)(39.51,17.78832)(39.67,17.78832)(39.84,17.78210)(40.01,17.77204)(40.17,17.77204)(40.34,17.76396)(40.51,17.76396)(40.67,17.76396)(40.84,17.75739)(41.00,17.75432)(41.17,17.75432)(41.34,17.74775)(41.50,17.74775)(41.67,17.74775)(41.84,17.73686)(42.01,17.73686)(42.17,17.73686)(42.34,17.72689)(42.50,17.72689)(42.67,17.72689)(42.83,17.72642)(43.01,17.71526)(43.18,17.71526)(43.34,17.71000)(43.50,17.71000)(43.68,17.70365)(43.83,17.70073)(44.01,17.70073)(44.17,17.69250)(44.34,17.68843)(44.51,17.68843)(44.67,17.68318)(44.84,17.68318)(45.01,17.68318)(45.18,17.68318)(45.34,17.67797)(45.51,17.67797)(45.67,17.67654)(45.84,17.66261)(46.00,17.65947)(46.17,17.65528)(46.33,17.65528)(46.50,17.65457)(46.67,17.65277)(46.84,17.65277)(47.01,17.64885)(47.18,17.64096)(47.34,17.63468)(47.51,17.63101)(47.67,17.63101)(47.83,17.63101)(48.01,17.62685)(48.18,17.62448)(48.34,17.62448)(48.51,17.61036)(48.67,17.61036)(48.84,17.61036)(49.01,17.61036)(49.18,17.61036)(49.34,17.61036)(49.50,17.60750)(49.67,17.60017)(49.84,17.60017)(50.01,17.59438)(50.18,17.59438)(50.34,17.59408)(50.51,17.59369)(50.68,17.59369)(50.84,17.57862)(51.00,17.57862)(51.17,17.57862)(51.34,17.57513)(51.51,17.57513)(51.67,17.56803)(51.84,17.56677)(52.00,17.56677)(52.18,17.56677)(52.34,17.55576)(52.50,17.55576)(52.67,17.55539)(52.84,17.55131)(53.01,17.55131)(53.17,17.54426)(53.34,17.54426)(53.51,17.54426)(53.67,17.54426)(53.84,17.54052)(54.01,17.53186)(54.17,17.53065)(54.34,17.53065)(54.51,17.52439)(54.67,17.52439)(54.84,17.52439)(55.01,17.52098)(55.17,17.51929)(55.34,17.51762)(55.50,17.51502)(55.67,17.51502)(55.84,17.51502)(56.01,17.51502)(56.17,17.51230)(56.34,17.51230)(56.51,17.50903)(56.67,17.50903)(56.84,17.50838)(57.00,17.50522)(57.18,17.49819)(57.34,17.49626)(57.51,17.49626)(57.68,17.49139)(57.84,17.48775)(58.01,17.48775)(58.17,17.48775)(58.34,17.48344)(58.50,17.48344)(58.67,17.47802)(58.84,17.47802)(59.01,17.47802)(59.17,17.47802)(59.34,17.47202)(59.51,17.47077)(59.68,17.46883)(59.84,17.46883)(60.01,17.46174)
    };

    \legend{Double precision ADMM,Single precision ADMM}
\end{axis}
\end{tikzpicture}
\caption{\tableDescr{evil-N184-p98-myc23x8}{184}{16}{2500}} 
\label{fig_singleVDoubleOverTime}
\end{figure}

\subsection{\lassColName{} bounds on \texorpdfstring{$\alpha(G)$}{alpha(G)}} \label{sect:numericBounds}
We investigate the quality of the upper bounds on the stability number of graphs obtained from the Lasserre hierarchy, and computed via the ADMM. For the remainder of this section, we refer to such bounds as \lassColName{} bounds. For all the tables that follow, except \Cref{table_moreCPU}, we set the maximum basis size $s =2500$. 
This value is chosen to ensure the ADMM converges within one hour on most graphs.
In the remainder of this section, we initialize the ADMM iterates using the method described in \Cref{section_admmInitialize}.

\subsubsection{SDP bounds for evil, random and near-regular graphs}
\label{section_evailRandNearRegRand}
We benchmark the \lassColName{} bounds on the complement of evil graphs\footnote{The evil graphs are available at \href{https://github.com/zbogdan/evil2/tree/main/evil-tests}{\texttt{https://github.com/zbogdan/evil2/tree/main/evil-tests}}.} \cite{szabo2019benchmark}, as well as on random graphs and near-regular graphs. These graphs are also tested in \cite{pucher2024class}, and can be described as follows:
\begin{itemize}
    \item \textbf{Evil graphs \cite{szabo2019benchmark}.} These are benchmark graphs for the \NpComplexity{}-hard clique problem. The clique problem on a graph $G$ is equivalent to to the stable set problem on the complement graph of $G$. Therefore, we consider the complement of evil graphs. 
    \item \textbf{Random graphs.} These are generated following the Erd\"os-R\'enyi model. That is, given $n \in \mathbb{N}$ and $p \in (0,1)$, generate a graph by taking $n$ vertices and creating edges $\edge{i}{j}$ independently with probability $p$ for every $i,j \in [n]$.
    \item \textbf{Near-regular graphs.} For given $n,r \in \mathbb{N}$ such that $nr$ is even, these graphs are constructed as follows (see also \cite[Sect.~7.2]{gaar2020computational}): consider a set of $nr$ vertices given by $\widetilde{V}:= \setFunct{ \{i,b\} }{i \in [n], b \in [r]}$. Select a perfect matching on the vertices in $\widetilde{V}$ to obtain the edge set $\widetilde{E} \subseteq \widetilde{V} \times \widetilde{V}$. Consider now the graph $G$ with vertices $V = [n]$ and edge set $E = \setFunct{ \{i,j\} }{ \exists b,b' \in [r] \text{ s.t. } \{ \{i,b\}, \{j,b'\} \} \in \widetilde{E}}$. Note that $G$ is a regular multigraph. Remove from $G$ any parallel edges and self-loops. The resulting graph is said to be near-regular.
\end{itemize}
We use the  same exact random and near-regular graphs as in \cite{pucher2024class}. For each graph, we use the method provided in \Cref{section_choosingB} to select a basis of size 2500 for computing the \lassColName{} bounds. Because $2500 < \abs*{\partialPowerSet{n}{2} \cap \allStable{G}}$ for all graphs, the resulting bounds correspond to the Lasserre hierarchy at levels intermediate to 1 and 2.

\Cref{table_jan15Results} reports the \lassColName{} bounds on $\alpha(G)$, computed via the ADMM, in the column `\lassColName{}'. Columns $n$, $\abs*{E}$, and $\vartheta(G)$ report the number of vertices, number of edges, and Lov\'asz theta function respectively, for each graph. Column $\alpha(G)$ reports (bounds on) the stability number of the graphs. For evil graphs, the value of $\alpha(G)$ is known by construction. The values and intervals for the stability numbers of the random and near-regular graphs are taken from \cite[Table 6]{gaar2022sdp}. In \Cref{table_moreCPU}, we present improved bounds on $\alpha(G)$ compared to \cite[Table 6]{gaar2022sdp}.

The columns under `BOUND~2 \cite{pucher2024class}' in \Cref{table_jan15Results} report BOUND~2 from the recent paper \cite[Sect.~4.1]{pucher2024class}, which provides one of the best SDP-based upper bounds on $\alpha(G)$. BOUND~2 is obtained by strengthening the Lov{\'a}sz theta function with additional valid inequalities, such as triangle inequalities and inequalities induced by complete subgraphs of $G$ (i.e., constraints of the form $\sum_{i \in U} x_i \leq 1$ where $U$ is a set of pairwise connected vertices). BOUND~2 is computed by the IPM-based SDP solver MOSEK \cite{mosek}. We computed BOUND 2 on the same machine  we used to compute the \lassColName{} bounds\footnote{The code for BOUND 2 is available at \href{https://arxiv.org/src/2401.17069v2/anc}{\texttt{https://arxiv.org/src/2401.17069v2/anc}}.}. Additional details regarding BOUND~2 can be found in \cite[Sect.~4.1]{pucher2024class}. 

The columns `\colName{}' (for gap closed) {under \lassColName{} and BOUND 2} report the fraction $\frac{\vartheta(G) - f^*}{\vartheta(G) - \alpha(G)}$ {rounded down to the first digit, where $f^*$ equals the value of the corresponding bound.} Note that $f^* \in [\alpha(G),\vartheta(G)]$. For the three graphs with unknown $\alpha(G)$, we use the best known lower bound on $\alpha(G)$ from \cite[Table 6]{gaar2022sdp} to compute \colName{}. Lastly, bounds that equal $\alpha(G)$ when rounded down are boldfaced.

Computing BOUND~2 for the graphs in \Cref{table_jan15Results} requires on average only 4 minutes of computation time per graph. The ADMM required on average 39 minutes per graph. The \lassColName{} bounds improve over BOUND~2 for various graphs, sometimes closing the gap towards $\alpha(G)$ whereas BOUND~2 did not (see the graphs \texttt{evil-N125-p98-s3m25x5}, \texttt{evil-N138-p98-myc23x6}, \texttt{reg\_n100\_r6} and \texttt{reg\_n100\_r8}). {There is one graph in \Cref{table_jan15Results}, \texttt{rand\_n200\_p002}, for which the \lassColName{} bound does not improve upon the Lov{\'a}sz theta function $\vartheta(G) = \alpha^{\partialPowerSet{n}{1}}(G) =95.778$. We ran our ADMM algorithm for 4 hours to recompute the \lassColName{} bound for this graph, which resulted in an improved bound of $95.244$, see also \Cref{table_improvedLassSDPName}.

We rerun the ADMM algorithm on the graphs \texttt{reg\_n200\_r10} and \texttt{reg\_n200\_r6}  with increased basis size $\abs*{\mathcal{B}}$ and extended running time.   These changes yield improved upper bounds on $\alpha(G)$ compared to those in \cite[Table 6]{gaar2022sdp}, see also \Cref{table_jan15Results}. The improved bounds are reported in \Cref{table_moreCPU}, where column `$\abs*{\mathcal{B}}$' reports the used basis size and column `T. (min.)' reports the runtime rounded to the nearest minute. In these computations, we use $K_\text{stag} = 500$, see \eqref{eqn_objStagnation}.

Lastly, we compare the \lassColName{} bounds also with the ESH approach from \cite{gaar2020computational}. Specifically, we compare the \lassColName{} bounds with the ESH bounds on the random and near-regular graphs reported in Tables 4 and 5 of \cite{gaar2020computational}. Computational results are presented in \Cref{table_newComparison}, with identical columns as \Cref{table_jan15Results}, and the newly added column `ESH'. The columns under `ESH' report the obtained bounds by the ESH and corresponding \colName{} values. The \lassColName{} bounds improve over the ESH bounds in 11 of the 15 reported graphs. 
The computation times of these ESH bounds is reported in \cite{gaar2020computational}, although the used computer is not specified. For each of the graphs in \Cref{table_newComparison}, the ESH required on average 1454 seconds. Note that \lassColName{} bounds, when rounded down to the closest integer, close the gap for more graphs than the other two approaches. 
\begin{table}[ht]
\centering
\begin{tabular}{lrrr|rr|rr|r}
\hline
\multicolumn{1}{l}{\multirow{2}{*}{Graph   name}} & \multicolumn{1}{c}{\multirow{2}{*}{$n$}} & \multicolumn{1}{c}{\multirow{2}{*}{$\abs*{E}$}} & \multicolumn{1}{c|}{\multirow{2}{*}{$\alpha(G)$}} & \multicolumn{2}{c|}{\lassColName{}} & \multicolumn{2}{c|}{BOUND 2   \cite{pucher2024class}} & \multicolumn{1}{c}{\multirow{2}{*}{$\vartheta(G)$}} \\
\multicolumn{1}{c}{} & \multicolumn{1}{c}{} & \multicolumn{1}{c}{} & \multicolumn{1}{c|}{} & \multicolumn{1}{l}{Bound} & \multicolumn{1}{l|}{\colName{}} & \multicolumn{1}{l}{Bound} & \multicolumn{1}{l|}{\colName{}} & \multicolumn{1}{c}{} \\ \hline
\texttt{evil-N120-p98-chv12x10} & 120 & 545 & 20 & \textbf{20.355} & 92.1\% & \textbf{20.000} & 99.9\% & 24.526 \\
\texttt{evil-N120-p98-myc5x24} & 120 & 236 & 48 & \textbf{48.466} & 89.8\% & \textbf{48.000} & 99.9\% & 52.607 \\
\texttt{evil-N121-p98-myc11x11} & 121 & 508 & 22 & \textbf{22.361} & 91.7\% & \textbf{22.000} & 99.9\% & 26.397 \\
\texttt{evil-N125-p98-s3m25x5} & 125 & 873 & 20 & \textbf{20.291} & 94.1\% & 22.361 & 52.7\% & 25.000 \\
\texttt{evil-N138-p98-myc23x6} & 138 & 1242 & 12 & \textbf{12.501} & 84.2\% & 15.177 & 0.0\% & 15.177 \\
\texttt{evil-N150-p98-myc5x30} & 150 & 338 & 60 & 61.477 & 71.1\% & \textbf{60.000} & 99.9\% & 65.121 \\
\texttt{evil-N150-p98-s3m25x6} & 150 & 1102 & 24 & 25.239 & 79.3\% & 26.833 & 52.7\% & 30.000 \\
\texttt{evil-N154-p98-myc11x14} & 154 & 701 & 28 & \textbf{28.316} & 94.3\% & \textbf{28.000} & 99.9\% & 33.596 \\
\texttt{evil-N180-p98-chv12x15} & 180 & 944 & 30 & \textbf{30.391} & 94.2\% & \textbf{30.000} & 99.9\% & 36.788 \\
\texttt{evil-N180-p98-myc5x36} & 180 & 439 & 72 & 75.626 & 29.0\% & \textbf{72.000} & 99.9\% & 77.110 \\
\texttt{evil-N184-p98-myc23x8} & 184 & 1764 & 16 & 17.504 & 64.4\% & 20.235 & 0.0\% & 20.235 \\
\texttt{evil-N187-p98-myc11x17} & 187 & 901 & 34 & \textbf{34.332} & 95.1\% & \textbf{34.000} & 99.9\% & 40.795 \\
\texttt{evil-N200-p98-s3m25x8} & 200 & 1550 & 32 & 35.979 & 50.2\% & 35.777 & 52.7\% & 40.000 \\
\texttt{evil-N210-p98-myc5x42} & 210 & 541 & 84 & 86.513 & 58.1\% & \textbf{84.000} & 99.9\% & 90.001 \\
\texttt{evil-N220-p98-myc11x20} & 220 & 1130 & 40 & 42.791 & 65.0\% & \textbf{40.000} & 99.9\% & 47.994 \\
\texttt{evil-N230-p98-myc23x10} & 230 & 2263 & 20 & 22.553 & 51.7\% & 25.294 & 0.0\% & 25.294 \\
\texttt{evil-N240-p98-chv12x20} & 240 & 1352 & 40 & \textbf{40.461} & 94.9\% & \textbf{40.000} & 99.9\% & 49.051 \\
\texttt{evil-N240-p98-myc5x48} & 240 & 718 & 96 & 98.627 & 44.0\% & \textbf{96.011} & 99.7\% & 100.696 \\
\texttt{evil-N250-p98-s3m25x10} & 250 & 2050 & 40 & 45.003 & 49.9\% & 44.721 & 52.7\% & 50.000 \\
\texttt{evil-N253-p98-myc11x23} & 253 & 1456 & 46 & 49.447 & 62.5\% & \textbf{46.014} & 99.8\% & 55.193 \\
\texttt{evil-N300-p98-myc5x60} & 300 & 1033 & 120 & 121.876 & 34.4\% & \textbf{120.020} & 99.2\% & 122.861 \\
\texttt{rand\_n100\_p004} & 100 & 212 & 45 & \textbf{45.140} & 86.8\% & \textbf{45.027} & 97.4\% & 46.067 \\
\texttt{rand\_n100\_p006} & 100 & 303 & 38 & \textbf{38.199} & 91.5\% & \textbf{38.435} & 81.5\% & 40.361 \\
\texttt{rand\_n100\_p008} & 100 & 443 & 32 & \textbf{32.475} & 83.3\% & \textbf{32.433} & 84.7\% & 34.847 \\
\texttt{rand\_n100\_p010} & 100 & 489 & 32 & \textbf{32.029} & 98.5\% & \textbf{32.151} & 92.5\% & 34.020 \\
\texttt{rand\_n200\_p002} & 200 & 407 & 95 & \textbf{95.778} & 0.0\% & \textbf{95.043} & 94.5\% & \textbf{95.778} \\
\texttt{rand\_n200\_p003} & 200 & 631 & 81 & 82.425 & 46.4\% & \textbf{81.079} & 97.0\% & 83.662 \\
\texttt{rand\_n200\_p004} & 200 & 816 & 67 & 69.890 & 58.1\% & 69.818 & 59.2\% & 73.908 \\
\texttt{rand\_n200\_p005} & 200 & 991 & 64 & 67.355 & 33.4\% & 65.544 & 69.3\% & 69.039 \\
\texttt{reg\_n100\_r10} & 100 & 474 & 28 & 29.636 & 56.9\% & 29.431 & 62.3\% & 31.797 \\
\texttt{reg\_n100\_r4} & 100 & 195 & 40 & \textbf{40.333} & 90.3\% & \textbf{40.713} & 79.3\% & 43.449 \\
\texttt{reg\_n100\_r6} & 100 & 294 & 34 & \textbf{34.667} & 82.5\% & 35.047 & 72.5\% & 37.815 \\
\texttt{reg\_n100\_r8} & 100 & 377 & 31 & \textbf{31.645} & 81.4\% & 32.063 & 69.4\% & 34.480 \\
\texttt{reg\_n200\_r10} & 200 & 980 & $[57,59]$ & 60.052 & 67.5\% & 62.695 & 39.5\% & 66.418 \\
\texttt{reg\_n200\_r4} & 200 & 400 & 81 & 82.450 & 78.5\% & 82.246 & 81.5\% & 87.759 \\
\texttt{reg\_n200\_r6} & 200 & 593 & $[69,72]$ & 72.685 & 64.1\% & 73.709 & 54.1\% & 79.276 \\
\texttt{reg\_n200\_r8} & 200 & 792 & $[60,63]$ & 64.749 & 55.9\% & 66.789 & 37.0\% & 70.790 \\ \hline
\end{tabular}
\caption{Results on evil, random and near-regular graphs.}
\label{table_jan15Results}
\end{table}

\begin{table}[ht]
\centering
\begin{tabular}{lrrrr|crr}
\hline
\multicolumn{1}{l}{\multirow{2}{*}{Graph   name}} & \multicolumn{1}{c}{\multirow{2}{*}{$n$}} & \multicolumn{1}{c}{\multirow{2}{*}{$\left|E\right|$}} & \multicolumn{2}{c|}{$\alpha(G)$ bounds} & \multicolumn{1}{c}{\multirow{2}{*}{\lassColName{}}} & \multicolumn{1}{c}{\multirow{2}{*}{$\abs*{\mathcal{B}}$}} & \multicolumn{1}{c}{\multirow{2}{*}{T. (min.)}} \\
\multicolumn{1}{c}{}                              & \multicolumn{1}{c}{}                     & \multicolumn{1}{c}{}                          & New          & Old \cite{gaar2022sdp}   & \multicolumn{1}{c}{}                           & \multicolumn{1}{c}{}                     & \multicolumn{1}{c}{}                           \\ \hline
\texttt{reg\_n200\_r10}                           & 200                                      & 980                                           & $[57,58]$    & $[57,59]$                & 58.932                                         & 4250                                     & 180                                            \\
\texttt{reg\_n200\_r6}                            & 200                                      & 593                                           & $[69,71]$    & $[69,72]$                & 71.821                                         & 3750                                     & 105                                            \\ \hline
\end{tabular}
\caption{Improved bounds on $\alpha(G)$ with larger basis size $\abs*{\mathcal{B}}$ and longer running time.}
\label{table_moreCPU}
\end{table}

\begin{table}[]
\begin{tabular}{lr|rr|rr|rr|r}
\hline
\multicolumn{1}{l}{\multirow{2}{*}{Graph   name}} & \multicolumn{1}{c|}{\multirow{2}{*}{$\alpha(G)$}} & \multicolumn{2}{c|}{\lassColName{}} & \multicolumn{2}{c|}{BOUND 2   \cite{pucher2024class}} & \multicolumn{2}{c|}{ESH   \cite{gaar2020computational}} & \multicolumn{1}{c}{\multirow{2}{*}{$\vartheta(G)$}} \\
\multicolumn{1}{c}{} & \multicolumn{1}{c|}{} & \multicolumn{1}{l}{Bound} & \multicolumn{1}{l|}{\colName{}} & \multicolumn{1}{l}{Bound} & \multicolumn{1}{l|}{\colName{}} & \multicolumn{1}{l}{Bound} & \multicolumn{1}{l|}{\colName{}} & \multicolumn{1}{c}{} \\ \hline
\texttt{rand\_n100\_p004} & 45 & \textbf{45.140} & 86.8\% & \textbf{45.027} & 97.4\% & \textbf{45.021} & 98.0\% & 46.067 \\
\texttt{rand\_n100\_p006} & 38 & \textbf{38.199} & 91.5\% & \textbf{38.435} & 81.5\% & \textbf{38.439} & 81.4\% & 40.361 \\
\texttt{rand\_n100\_p008} & 32 & \textbf{32.475} & 83.3\% & \textbf{32.433} & 84.7\% & \textbf{32.579} & 79.6\% & 34.847 \\
\texttt{rand\_n100\_p010} & 32 & \textbf{32.029} & 98.5\% & \textbf{32.151} & 92.5\% & \textbf{32.191} & 90.5\% & 34.020 \\
\texttt{rand\_n200\_p002} & 95 & \textbf{95.778} & 0.0\% & \textbf{95.043} & 94.5\% & \textbf{95.032} & 95.8\% & \textbf{95.778} \\
\texttt{rand\_n200\_p003} & 81 & 82.425 & 46.4\% & \textbf{81.079} & 97.0\% & \textbf{81.224} & 91.5\% & 83.662 \\
\texttt{rand\_n200\_p004} & 67 & 69.890 & 58.1\% & 69.818 & 59.2\% & 70.839 & 44.4\% & 73.908 \\
\texttt{rand\_n200\_p005} & 64 & 67.355 & 33.4\% & 65.544 & 69.3\% & 66.091 & 58.5\% & 69.039 \\
\texttt{reg\_n100\_r4} & 40 & \textbf{40.333} & 90.3\% & \textbf{40.713} & 79.3\% & \textbf{40.687} & 80.0\% & 43.449 \\
\texttt{reg\_n100\_r6} & 34 & \textbf{34.667} & 82.5\% & 35.047 & 72.5\% & 35.246 & 67.3\% & 37.815 \\
\texttt{reg\_n100\_r8} & 31 & \textbf{31.645} & 81.4\% & 32.063 & 69.4\% & 32.190 & 65.8\% & 34.480 \\
\texttt{reg\_n200\_r10} & $[57,59]$ & 60.052 & 67.5\% & 62.695 & 39.5\% & 62.894 & 37.4\% & 66.418 \\
\texttt{reg\_n200\_r4} & 81 & 82.450 & 78.5\% & 82.246 & 81.5\% & 83.732 & 59.5\% & 87.759 \\
\texttt{reg\_n200\_r6} & $[69,72]$ & 72.685 & 64.1\% & 73.709 & 54.1\% & 75.555 & 36.2\% & 79.276 \\
\texttt{reg\_n200\_r8} & $[60,63]$ & 64.749 & 55.9\% & 66.789 & 37.0\% & 67.785 & 27.8\% & 70.790 \\ \hline
\end{tabular}
\caption{Comparison of the \lassColName{}, BOUND 2 and ESH bounds.}
\label{table_newComparison}
\end{table}

\subsubsection{SDP bounds on graphs from \texorpdfstring{\cite{gaar2024different}}{Gaar}}
We investigate the quality of the \lassColName{} bounds on the graphs tested in \cite{gaar2024different}. This set of graphs\footnote{The graphs used in \cite{gaar2024different} are available at \href{https://arxiv.org/src/2003.13605v6/anc/Data_InputGraphs.mat}{\texttt{https://arxiv.org/src/2003.13605v6/anc/Data\_InputGraphs.mat}}.} contains, among others, complements of DIMACS graphs, additional random Erd\"os-R\'enyi graphs, a spin glass graph, graphs from \cite{brinkmann2013house}, as well as a Paley, a circulant and a cubic graph. Note that stability numbers of the DIMACS graphs are known, see e.g., \cite{balas1996weighted}. 

For all graphs, we use the method from \Cref{section_choosingB} to select a basis of size at most 2500 for the ADMM. The tested graphs on $n \leq 80$ vertices satisfy $\abs*{\partialPowerSet{n}{2} \cap \allStable{G}} \leq 2500$, which implies that we compute the full second level of the Lasserre hierarchy for those graphs.
 The results are reported in \Cref{table_gaarTable}, with the same  column definitions as in~\Cref{table_jan15Results}, except for the new column denoted by $|\mathcal{B}|$. This column reports the size of the used basis (in \Cref{table_jan15Results}, $|\mathcal{B}| = 2500$ for every graph). Bounds that equal $\alpha(G)$ when rounded down are boldfaced.
 
 We again compare our \lassColName{} bounds with BOUND~2 from \cite{pucher2024class}. Both these bounds are stronger than the bounds in \cite{gaar2024different}. Computing BOUND~2 for the graphs in \Cref{table_gaarTable} requires on average only two minutes of computation time per graph. However, for the larger graphs ($n \geq 150$), BOUND~2 cannot be computed due to insufficient memory, as indicated by the table entry `$-$'. The large memory requirement of BOUND~2 is due to the large number of linear constraints.
 The \lassColName{} bounds improve significantly over $\vartheta(G)$, and often also over BOUND~2. In particular, the \lassColName{} bounds often close the gap towards $\alpha(G)$ when rounded down. All graphs with $n < 72$ took at most 122 seconds, except for \texttt{G\_60\_025} which required approximately 270 seconds. For graphs with $n \geq 72$, the ADMM algorithm required between 15 and 60 minutes minutes to terminate. 

 For the graph \texttt{c\_fat200\_5} in \Cref{table_gaarTable}, the \lassColName{} bound does not improve over $\vartheta(G) = 60.345$. We ran our ADMM algorithm for 4 hours to recompute the \lassColName{} bound of this graph, which resulted in an improved bound of $60.317$, see also \Cref{table_improvedLassSDPName}.

\begin{table}[ht]
\centering
\begin{tabular}{lrrr|rrr|rr|r}
\hline
\multicolumn{1}{l}{\multirow{2}{*}{Graph   name}} & \multicolumn{1}{c}{\multirow{2}{*}{$n$}} & \multicolumn{1}{c}{\multirow{2}{*}{$\abs*{E}$}} & \multicolumn{1}{c|}{\multirow{2}{*}{$\alpha(G)$}} & \multicolumn{3}{c|}{\lassColName{}} & \multicolumn{2}{c|}{BOUND 2   \cite{pucher2024class}} & \multicolumn{1}{c}{\multirow{2}{*}{$\vartheta(G)$}} \\
\multicolumn{1}{c}{} & \multicolumn{1}{c}{} & \multicolumn{1}{c}{} & \multicolumn{1}{c|}{} & \multicolumn{1}{l}{$\abs*{\mathcal{B}}$} & \multicolumn{1}{l}{Bound} & \multicolumn{1}{l|}{\colName{}} & \multicolumn{1}{l}{Bound} & \multicolumn{1}{l|}{\colName{}} & \multicolumn{1}{c}{} \\ \hline
\texttt{HoG\_34272} & 9 & 17 & 3 & 29 & \textbf{3.000} & 99.8\% & \textbf{3.000} & 99.9\% & \textbf{3.338} \\
\texttt{HoG\_15599} & 20 & 44 & 7 & 167 & \textbf{7.003} & 99.6\% & \textbf{7.000} & 99.9\% & \textbf{7.820} \\
\texttt{CubicVT26\_5} & 26 & 39 & 10 & 313 & \textbf{10.100} & 94.5\% & \textbf{10.662} & 63.5\% & 11.817 \\
\texttt{HoG\_34274} & 36 & 72 & 12 & 595 & \textbf{12.010} & 99.1\% & \textbf{12.000} & 99.9\% & 13.232 \\
\texttt{HoG\_6575} & 45 & 225 & 10 & 811 & \textbf{10.132} & 97.3\% & 13.220 & 36.2\% & 15.053 \\
\texttt{MANN\_a9\_clq} & 45 & 72 & 16 & 964 & \textbf{16.281} & 80.9\% & 17.225 & 16.9\% & 17.475 \\
\texttt{Circulant47\_030} & 47 & 282 & 13 & 847 & \textbf{13.003} & 99.7\% & \textbf{13.026} & 97.9\% & 14.302 \\
\texttt{G\_50\_025} & 50 & 308 & 12 & 968 & \textbf{12.028} & 98.2\% & \textbf{12.367} & 76.5\% & 13.564 \\
\texttt{G\_60\_025} & 60 & 450 & 13 & 1381 & \textbf{13.003} & 99.7\% & \textbf{13.241} & 81.1\% & 14.281 \\
\texttt{PaleyGraph61} & 61 & 915 & 5 & 977 & \textbf{5.289} & 89.7\% & 7.810 & 0.0\% & 7.810 \\
\texttt{hamming6\_4} & 64 & 1312 & 4 & 769 & \textbf{4.032} & 97.5\% & \textbf{4.749} & 43.7\% & 5.333 \\
\texttt{HoG\_34276} & 72 & 144 & 24 & 2485 & \textbf{24.042} & 98.2\% & \textbf{24.000} & 99.9\% & 26.463 \\
\texttt{G\_80\_050} & 80 & 1620 & 9 & 1621 & \textbf{9.001} & 99.7\% & \textbf{9.092} & 78.8\% & \textbf{9.435} \\
\texttt{G\_100\_025} & 100 & 1243 & 17 & 2500 & \textbf{17.000} & 99.9\% & 18.428 & 41.5\% & 19.441 \\
\texttt{spin5} & 125 & 375 & 50 & 2500 & \textbf{50.236} & 95.9\% & \textbf{50.000} & 99.9\% & 55.902 \\
\texttt{G\_150\_025} & 150 & 2835 & 19 & 2500 & \textbf{19.127} & 97.3\% & \multicolumn{2}{c|}{$-$} & 23.718 \\
\texttt{keller4} & 171 & 5100 & 11 & 2500 & \textbf{11.622} & 79.3\% & \multicolumn{2}{c|}{$-$} & 14.012 \\
\texttt{G\_200\_025} & 200 & 4905 & 21 & 2500 & 23.656 & 63.1\% & \multicolumn{2}{c|}{$-$} & 28.217 \\
\texttt{brock200\_1} & 200 & 5066 & 21 & 2500 & 22.912 & 70.3\% & \multicolumn{2}{c|}{$-$} & 27.457 \\
\texttt{c\_fat200\_5} & 200 & 11427 & 58 & 2500 & 60.345 & 0.0\% & \multicolumn{2}{c|}{$-$} & 60.345 \\
\texttt{sanr200\_0\_9} & 200 & 2037 & 42 & 2500 & 43.856 & 74.4\% & \multicolumn{2}{c|}{$-$} & 49.274 \\ \hline
\end{tabular}
\caption{Results on the graphs from \cite{gaar2024different}.}
\label{table_gaarTable}
\end{table}

\begin{table}[]
\centering
\begin{tabular}{lccc|cc|r}
\hline
\multicolumn{1}{l}{\multirow{2}{*}{Graph   name}} & \multirow{2}{*}{$n$} & \multirow{2}{*}{$\abs*{E}$} & \multirow{2}{*}{$\alpha(G)$} & \multicolumn{2}{c|}{\lassColName{} bound   after} & \multirow{2}{*}{$\vartheta(G)$} \\
\multicolumn{1}{c}{} &  &  &  & \multicolumn{1}{l}{1 hour} & \multicolumn{1}{l|}{\hspace{1.8em} 4 hours} &  \\ \hline
\texttt{rand\_n200\_p002} & \multicolumn{1}{r}{200} & \multicolumn{1}{r}{407} & \multicolumn{1}{r|}{95} & \textbf{95.778} & \textbf{95.244} & \multicolumn{1}{r}{\textbf{95.778}} \\
\texttt{c\_fat200\_5} & \multicolumn{1}{r}{200} & \multicolumn{1}{r}{11427} & \multicolumn{1}{r|}{58} & 60.345 & 60.317 & \multicolumn{1}{r}{60.345} \\ \hline
\end{tabular}
\caption{Improved \lassColName{} bounds with longer ADMM running times.}
\label{table_improvedLassSDPName}
\end{table}

\subsubsection{SDP bounds on SageMath graphs}
We compute the \lassColName{} bounds for several graphs from the SageMath \cite{sagemath} software. Specifically, we consider SageMath graphs on at least 30 vertices,  for which $\vartheta(G)$ is strictly larger than $\alpha(G)$. 

The results are reported in \Cref{table_sageGraphs}, which uses the same column definitions as \Cref{table_gaarTable}.
The \lassColName{} bounds improve significantly over $\vartheta(G)$, and all of them equal $\alpha(G)$ when rounded down. 
On average, the ADMM algorithm required 1098 seconds to terminate for each graph.
It is worth noting that for each graph, the ADMM required at most 900 seconds to reach an iteration $\ell$ for which $\lfloor \lassObj{X^\ell} \rfloor = \alpha(G)$, see \eqref{eqn_lemmaShiftState}. The computation of BOUND~2 required at most 220 seconds, but there are some graphs in \Cref{table_sageGraphs} for which the floor of BOUND~2 does not equal $\alpha(G)$.
\begin{table}[ht]
\centering
\begin{tabular}{lrrr|rrr|rr|r}
\hline
\multicolumn{1}{l}{\multirow{2}{*}{Graph   name}} & \multicolumn{1}{c}{\multirow{2}{*}{$n$}} & \multicolumn{1}{c}{\multirow{2}{*}{$\abs*{E}$}} & \multicolumn{1}{c|}{\multirow{2}{*}{$\alpha(G)$}} & \multicolumn{3}{c|}{\lassColName{}} & \multicolumn{2}{c|}{BOUND 2 \cite{pucher2024class}} & \multicolumn{1}{c}{\multirow{2}{*}{$\vartheta(G)$}} \\
\multicolumn{1}{c}{} & \multicolumn{1}{c}{} & \multicolumn{1}{c}{} & \multicolumn{1}{c|}{} & \multicolumn{1}{l}{$\abs*{\mathcal{B}}$} & \multicolumn{1}{l}{Bound} & \multicolumn{1}{l|}{\colName{}} & \multicolumn{1}{l}{Bound} & \multicolumn{1}{l|}{\colName{}} & \multicolumn{1}{c}{} \\ \hline
\texttt{DoubleStarSnark} & 30 & 45 & 13 & 421 & \textbf{13.001} & 99.9\% & \textbf{13.001} & 99.8\% & \textbf{13.735} \\
\texttt{Wells} & 32 & 80 & 10 & 449 & \textbf{10.001} & 99.9\% & \textbf{10.906} & 54.7\% & \textbf{12.000} \\
\texttt{Sylvester} & 36 & 90 & 12 & 577 & \textbf{12.001} & 99.9\% & \textbf{12.034} & 97.7\% & 13.500 \\
\texttt{SzekeresSnark} & 50 & 75 & 21 & 1201 & \textbf{21.035} & 97.7\% & \textbf{21.359} & 76.6\% & 22.537 \\
\texttt{Klein3RegularGraph} & 56 & 84 & 23 & 1513 & \textbf{23.059} & 97.8\% & \textbf{23.908} & 67.4\% & 25.793 \\
\texttt{Gosset} & 56 & 756 & 4 & 841 & \textbf{4.048} & 97.0\% & 5.600 & 0.0\% & 5.600 \\
\texttt{Gritsenko} & 65 & 1040 & 6 & 1106 & \textbf{6.097} & 95.2\% & 8.062 & 0.0\% & 8.062 \\
\texttt{Meredith} & 70 & 140 & 34 & 2346 & \textbf{34.094} & 80.7\% & \textbf{34.003} & 99.3\% & \textbf{34.489} \\
\texttt{BrouwerHaemers} & 81 & 810 & 15 & 2500 & \textbf{15.041} & 99.3\% & 20.259 & 12.3\% & 21.000 \\
\texttt{HigmanSimsGraph} & 100 & 1100 & 22 & 2500 & \textbf{22.005} & 99.8\% & 26.667 & 0.0\% & 26.667 \\
\texttt{BiggsSmith} & 102 & 153 & 43 & 2500 & \textbf{43.536} & 85.4\% & 44.270 & 65.5\% & 46.686 \\
\texttt{Balaban11Cage} & 112 & 168 & 52 & 2500 & \textbf{52.124} & 92.2\% & \textbf{52.064} & 95.9\% & 53.595 \\ \hline
\end{tabular}
\caption{Results on several graphs from the SageMath software.}
\label{table_sageGraphs}
\end{table}

\section{\conclusionSectTitle{}}
\label{section_conclusion}
We have considered SDP bounds on the stability number of graphs obtained from the Lasserre hierarchy at relaxation level $2$, or relaxation levels intermediate to levels $1$ and $2$. Most of the SDPs considered here cannot be handled by IPMs, as they require excessive memory due to the large number of constraints. Therefore, we compute the bounds using the ADMM. The main operations of the ADMM algorithm are the projection onto the PSD cone \eqref{eqn_psdProjection}, and the projection onto half-spaces, see \Cref{lemma_halfSpaceProject}.
Although the former projection is significantly more computationally expensive than the latter, it is still manageable for the problem sizes considered in this work.
For improved performance, our ADMM algorithm also uses warm-starting.

For Lasserre levels intermediate to levels $1$ and $2$, we propose a method to select a basis of variables for the relaxation, see \Cref{section_choosingB}. We use this basis selection method to  choose  bases of size at most 2500 for computing bounds on $\alpha(G)$ for various graphs in \Cref{sect:numericBounds}. With this basis size, the ADMM algorithm often converges within one hour of computation time.

The computational experiments in \Cref{sect:numericBounds} show that the Lasserre hierarchy bounds, computed via the ADMM and referred to as \lassColName{} bounds, are competitive with other SDP-based stable set approaches from the literature, specifically \cite{gaar2020computational} and \cite{pucher2024class}. In particular, our approach provides the strongest known SDP bounds on $\alpha(G)$ for a variety of graphs, see \Crefrange{table_jan15Results}{table_sageGraphs}. For some large and dense graphs in \Cref{table_gaarTable}, the bound from \cite{pucher2024class} cannot be computed using the IPM due to its large memory requirement, in contrast to 
our \lassColName{} bounds. 

As future work, it would be interesting to evaluate bounds from the intermediate level Lasserre hierarchy on the stability number of highly symmetric graphs. Specifically, our basis selection method from \Cref{section_choosingB} is currently not suited to exploit   symmetry in the underlying graph. Preliminary computational experiments have shown that the current basis selection method can be significantly improved for  highly symmetric graphs. Another future research direction is to investigate faster methods for projecting onto the PSD cone, which is the bottleneck of the ADMM. If we find an algorithm for projecting onto the PSD cone that is better suited for the ADMM than the standard \eigenDecomp{}, the ADMM could speed up significantly.

\subsection*{Statements and Declarations}
\textbf{Conflict of interest.} The authors declare that they have no conflict of interest. \\

\noindent\textbf{Funding.} No funds, grants, or other support was received. \\
 
\noindent\textbf{Availability of data and materials.} All the graphs used in this work are available online, or on request. \\

\noindent\textbf{Ethics approval and consent to participate.} Not applicable. \\

\noindent\textbf{Consent for publication.} Not applicable.\\

\noindent\textbf{Acknowledgements.} Not applicable.
\bibliographystyle{abbrvnat}
\bibliography{myRefs.bib}

\end{document}